\def\reals{{\mathbb R}}
\def\R{{\mathbb R}}
\def\P{{\mathbb P}}
\def\N{{\mathbb N}}
\def\eps{{\varepsilon}}
\def\vphi{\varphi}
\def\si{\sigma}
\newtheorem{theorem}{Theorem}%[section]
\newtheorem{lemma}[theorem]{Lemma}
\newtheorem{prop}[theorem]{Proposition}
\theoremstyle{definition}
\newtheorem{defn}[theorem]{Definition}
\begin{document}

\title{The number of unit-area triangles in the plane: Theme and variations\footnote{Work on this paper by Orit E. Raz and Micha Sharir was supported by Grant 892/13 from the Israel Science Foundation and by the Israeli Centers of Research Excellence (I-CORE) program (Center No.~4/11). Work by Micha Sharir was also supported by Grant 2012/229 from the U.S.--Israel Binational Science Foundation and by the Hermann Minkowski-MINERVA Center for Geometry at Tel Aviv University.
Part of this research was performed while the authors were visiting the Institute for Pure and Applied Mathematics (IPAM), which is supported by the National Science Foundation. 
}}

\author{
Orit E. Raz\thanks{%
School of Computer Science, Tel Aviv University,
Tel Aviv 69978, Israel.
{\sl oritraz@post.tau.ac.il} }
\and
Micha Sharir\thanks{%
School of Computer Science, Tel Aviv University,
Tel Aviv 69978, Israel.
{\sl michas@post.tau.ac.il} }
}
%\date{}
\maketitle

\begin{abstract}
We show that the number of unit-area triangles determined by a set $S$ of $n$ points in the plane is $O(n^{20/9})$, improving the earlier bound $O(n^{9/4})$ of Apfelbaum and Sharir~\cite{AS10}. We also consider two special cases of this problem: (i) We show, using a somewhat subtle construction, that if $S$ consists of points on three lines, the number of unit-area triangles that $S$ spans can be $\Omega(n^2)$, for any triple of lines (it is always $O(n^2)$ in this case). (ii) We show that if $S$ is a {\em convex grid} of the form $A\times B$, where $A$, $B$ are {\em convex} sets of $n^{1/2}$ real numbers each (i.e., the sequences of differences of consecutive elements of $A$ and of $B$ are both strictly increasing), then $S$ determines $O(n^{31/14})$ unit-area triangles.
\end{abstract}

%%%%%%%%%%%%%%%%%%%%%%%%%%%%%%%%%%%%%%%%%%%%%%%%%%%%%%%%%%%%%%%%%%%

\section{Introduction}

In 1967, Oppenheim (see \cite{EP95}) asked the following question:
Given $n$ points in the plane and $A>0$, how many triangles spanned
by the points can have area $A$? By applying a scaling transformation,
one may assume $A=1$ and count the triangles of {\em unit} area.
Erd\H{o}s and Purdy~\cite{EP71} showed that a $\sqrt{\log n}\times
(n/\sqrt{\log n})$ section of the integer lattice determines
$\Omega(n^2 \log\log{n})$ triangles of the same area. They also showed
that the maximum number of such triangles is at most $O(n^{5/2})$. In
1992, Pach and Sharir~\cite{PS92} improved the bound to
$O(n^{7/3})$, using the Szemer\'edi-Trotter theorem~\cite{ST83} (see below)
on the number of point-line incidences. More recently,
Dumitrescu et al.~\cite{DST08} have further improved the upper bound 
to $O(n^{44/19})=O(n^{2.3158})$, by estimating the number of incidences
between the given points and a 4-parameter family of quadratic curves. 
In a subsequent improvement, Apfelbaum and Sharir~\cite{AS10} have obtained
the upper bound $O(n^{9/4+\eps})$, for any $\eps>0$, which has been 
slightly improved to $O(n^{9/4})$ in Apfelbaum~\cite{Apf-thesis}. This has been 
the best known upper bound so far.

In this paper we further improve the bound to $O(n^{20/9})$.
Our proof uses a different reduction of the problem to an incidence
problem, this time to incidences between points and two-dimensional 
algebraic surfaces in $\reals^4$. A very recent result of Solymosi and De Zeeuw~\cite{SodZ}
provides a sharp upper bound for the number of such incidences, similar
to the Szemer\'edi--Trotter bound, provided that the points, surfaces,
and incidences satisfy certain fairly restrictive assumptions. The main 
novel features of our analysis are thus (a) the reduction of the
problem to this specific type of incidence counting, and (b) showing
that the assumptions of \cite{SodZ} are satisfied in our context.

After establishing this main result, we consider two variations, in which
better bounds can be obtained.

We first consider the case where the input points lie on three 
arbitrary lines. It is easily checked that in this case there are at most 
$O(n^2)$ unit-area triangles. We show, in Section~\ref{sec:exa},
that this bound is tight, and can be attained for any triple 
of lines.
Rather than just presenting the construction, we spend some time
showing its connection to a more general problem studied by Elekes
and R\'onyai~\cite{ER00} (see also the recent developments in 
\cite{ESz12,RSS1,RSdZ}), involving the zero set of a trivariate polynomial 
within a triple Cartesian product. Skipping over the details, which are 
spelled out in Section~\ref{sec:exa}, it turns out that the case of
unit-area triangles determined by points lying on three lines is an
exceptional case in the theory of Elekes and R\'onyai~\cite{ER00}, which
then leads to a construction with $\Theta(n^2)$ unit-area triangles.

Another variation that we consider concerns unit-area triangles spanned by points in a {\em convex grid}.
That is, the input set is of the form $A\times B$, where $A$ and $B$
are convex sets  of $n^{1/2}$ real numbers each; a set of real numbers
is called {\em convex} if the differences between consecutive elements form
a strictly increasing sequence. We show that in this case $A\times B$ 
determine $O(n^{31/14})$ unit-area triangles. The main technical 
tool used in our analysis is a result of Schoen and Shkredov~\cite{SS11}
on difference sets involving convex sets.\footnote{Very recently, in work in progress, jointly with I. Shkredov, 
the bound is further improved in this case.}

%%%%%%%%%%%%%%%%%%%%%%%%%%%%%%%%%%%%%%%%%%%%%%%%%%%%%%%%%%%%%%%

\section{Unit-area triangles in the plane}\label{sec:general}
%In this section we establish our main result:
\begin{theorem}\label{main1}
The number of unit-area triangles spanned by $n$ points in the plane is $O(n^{20/9})$.
\end{theorem}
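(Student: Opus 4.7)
The plan is to reduce the counting of unit-area triangles to a point-surface incidence problem in $\R^4$, and then to apply the recent bound of Solymosi and De~Zeeuw~\cite{SodZ}. Identify $\R^4$ with $\R^2 \times \R^2$, and let $P := S \times S$, a set of $n^2$ points in $\R^4$. For each ordered pair of distinct points $(c,c') \in S \times S$, define
\[
\Sigma_{c,c'} := \{(p, q) \in \R^2 \times \R^2 : \mathrm{area}(p,q,c) = 1 \text{ and } \mathrm{area}(p,q,c') = 1\}.
\]
Each of the two defining equations is bilinear in $(p,q)$ (a polynomial of degree one in $p$ and degree one in $q$, hence degree two overall in the four coordinates), so generically $\Sigma_{c,c'}$ is a two-dimensional real algebraic variety of bounded degree, and the family $\Sigma = \{\Sigma_{c,c'}\}$ has cardinality $O(n^2)$.

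To tie the incidence count to the triangle count, observe that an incidence $(p,q) \in P \cap \Sigma_{c,c'}$ corresponds exactly to a pair of unit-area triangles $pqc$ and $pqc'$ sharing the ordered edge $pq$. Writing $t(p,q)$ for the number of $c \in S$ with $\mathrm{area}(p,q,c) = 1$, the total incidence count is
\[
I(P, \Sigma) = \sum_{(p,q) \in P} t(p,q)\bigl(t(p,q) - 1\bigr),
\]
while $\sum_{(p,q)} t(p,q) = 6T$, since every unit-area triangle is counted six times (once per choice of ordered edge). The Cauchy--Schwarz inequality therefore yields the lower bound $I(P,\Sigma) = \Omega(T^2/n^2)$, in the only nontrivial range $T \gg n^2$ (otherwise the desired bound is automatic, since $n^2 \leq n^{20/9}$). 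The next step is to invoke the point-surface incidence bound of~\cite{SodZ}, which under suitable non-degeneracy conditions supplies a Szemer\'edi--Trotter-type upper bound on $I(P,\Sigma)$; substituting $|P| = |\Sigma| = \Theta(n^2)$ into that bound and comparing with the lower bound produces the claimed $T = O(n^{20/9})$.

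The main obstacle is to verify that $(P, \Sigma)$ satisfies the non-degeneracy hypotheses of~\cite{SodZ}. These demand, in effect, that no two surfaces of $\Sigma$ share a positive-dimensional component, and that no low-degree two-dimensional variety in $\R^4$ contains a disproportionately large portion of either the points of $P$ or of the surfaces of $\Sigma$. The subtlety is that the surfaces $\Sigma_{c,c'}$ are indexed by ordered pairs of points of the \emph{same} set $S$, which creates ample opportunities for algebraic coincidences: two distinct parameter pairs $(c,c') \neq (d,d')$ might yield surfaces sharing a curve, or many surfaces might concentrate on a common low-degree variety. Such coincidences will correspond to highly restricted geometric configurations in $S$ (for instance, large collinear subsets, or special conic arrangements), and a careful case analysis will be needed either to rule them out, or to show that their contribution to $T$ is absorbed by the target bound, before invoking~\cite{SodZ} on the generic remainder. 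Carrying out this analysis cleanly, while controlling the exceptional strata, is the technical heart of the argument.
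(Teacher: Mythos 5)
Your overall strategy---reducing to incidences between the point set $S\times S\subset\R^4$ and a family of $O(n^2)$ two-dimensional constant-degree surfaces, then invoking Solymosi--De Zeeuw~\cite{SodZ}---is indeed the engine of the paper's proof, but as written your argument has two genuine gaps. The first is numerological: with $t(p,q)$ denoting the number of $c\in S$ with $\mathrm{area}(p,q,c)=1$, your Cauchy--Schwarz step gives $T\le\tfrac13\sum_{(p,q)}t(p,q)\le\tfrac13|P|^{1/2}\bigl(\sum_{(p,q)}t(p,q)^2\bigr)^{1/2}$, and even granting the optimal bound $I(P,\Sigma)=O(|P|^{2/3}|\Sigma|^{2/3})=O(n^{8/3})$ for $\sum t(t-1)$, this yields only $T=O(n\cdot n^{4/3})=O(n^{7/3})$---the old Pach--Sharir bound, not $O(n^{20/9})$. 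The missing idea is a threshold decomposition of the lines $\ell_{pq}'$ (the loci of the third vertex) according to their richness $|\ell\cap S|$: lines with fewer than $k$ or more than $n/k$ points of $S$ contribute $O(n^2k)$ via elementary counting and Szemer\'edi--Trotter, while in the middle range one \emph{decouples} the two Cauchy--Schwarz factors, bounding $\sum_\ell|\ell\cap S|^2=O(n^2/k)$ by Szemer\'edi--Trotter and, separately, the number $|Q|$ of quadruples with $\ell_{pu}'=\ell_{qv}'$ by the $\R^4$ incidence theorem. This gives $O(n^2k+n^{7/3}/k^{1/2})$, and $k=n^{2/9}$ produces $n^{20/9}$; without some such extra leverage your scheme cannot beat $n^{7/3}$.

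The second gap is that the non-degeneracy hypothesis of~\cite{SodZ} is not a peripheral stratum for your family $\Sigma_{c,c'}$: it fails at the heart of the problem. If $\pi_1=(p_1,q_1)$ and $\pi_2=(p_2,q_2)$ satisfy $\ell_{p_1q_1}'=\ell_{p_2q_2}'=\ell$, then both $\pi_1$ and $\pi_2$ lie on $\Sigma_{c,c'}$ for \emph{every} ordered pair $c\neq c'$ of points of $S\cap\ell$, i.e.\ on $\Theta(|\ell\cap S|^2)$ surfaces, which can be as large as $\Theta(n)$; so the ``two points lie on $O(1)$ common surfaces'' condition is violated precisely by the line-multiplicity phenomenon the proof must control, and excising these configurations is tantamount to solving the original problem. (Your surfaces also fail the fiber-finiteness (``slanted'') condition along certain lines when $p\in\ell_{cc'}$.) The paper's family is genuinely different: the surface $\sigma_{pq}$ is indexed by a single pair $(p,q)$ and is the graph of the projective map sending $u$ to the unique $v$ with $\ell_{pu}'=\ell_{qv}'$; a careful geometric argument (Proposition~\ref{prop:admis}) then shows two such graphs share at most three relevant points once collinear quadruples---only $O(n^2\log n)$ of them---are discarded. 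You would need to redesign the surfaces along these lines rather than treat the degeneracies as an exceptional case analysis.
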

We first recall the Szemer\'edi--Trotter theorem \cite{ST83} on point-line incidences in the plane.
\begin{theorem}[{\bf Szemer\'edi and Trotter~\cite{ST83}}]\label{ST}
(i) The number of incidences between $M$ distinct points and $N$ distinct lines in the plane is 
$O(M^{2/3}N^{2/3}+M+N)$.
(ii) Given $M$ distinct points in the plane and a parameter $k\le M$, the number of lines incident to at least 
$k$ of the points is $O(M^2/k^3+M/k)$. Both bounds are tight in the worst case.
\end{theorem}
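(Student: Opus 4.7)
The plan is to prove part (i) via Sz\'ekely's crossing-number argument, derive part (ii) as a standard corollary by a dyadic/threshold argument, and exhibit a grid-based construction for the tightness claim.

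For part (i), I would build a drawing of a graph $G$ in the plane from the incidence configuration: take the $M$ points as vertices, and for each line $\ell$ carrying $k_\ell\ge 2$ of the points, insert $k_\ell-1$ edges along $\ell$ connecting consecutive points (in the natural order along $\ell$). Then the number of edges is $E=\sum_\ell (k_\ell-1) = I-N'$, where $I$ is the number of incidences and $N'\le N$ is the number of lines containing at least two points (lines with $\le 1$ incidence contribute at most $M+N$ incidences in total, an error absorbed into the $O(M+N)$ term). The key combinatorial input is the crossing lemma: any drawing of a graph with $V$ vertices and $E\ge 4V$ edges has crossing number at least $cE^3/V^2$ for an absolute constant $c>0$. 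On the other hand, in our drawing two edges can only cross at an intersection of two of the $N$ lines, so the number of crossings is at most $\binom{N}{2}\le N^2/2$. Combining these two bounds, $(I-N)^3/M^2 = O(N^2)$, which yields $I=O(M^{2/3}N^{2/3}+N)$; and in the alternative regime $E<4V$, we simply get $I\le 4M+N$. Together these give the claimed $O(M^{2/3}N^{2/3}+M+N)$.

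For part (ii), I would apply part (i) to the $L_k$ lines that have at least $k$ incidences with our $M$ points. The incidence count is at least $kL_k$, and at most $O\bigl(M^{2/3}L_k^{2/3}+M+L_k\bigr)$, so
\[
kL_k \;\le\; C\bigl(M^{2/3}L_k^{2/3}+M+L_k\bigr).
\]
Assuming $k$ is larger than a sufficiently large constant (otherwise the bound $O(M/k)$ is trivial from $L_k\le M^2$ or by discarding), the term $CL_k$ is absorbed into $kL_k/2$, and one solves the resulting inequality by balancing the two remaining terms on the right. This yields $L_k=O(M^2/k^3+M/k)$, as asserted.

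For the worst-case tightness, I would describe the standard integer-grid construction: take $P$ to be the $r\times s$ lattice $\{0,1,\dots,r-1\}\times\{0,1,\dots,s-1\}$ with $s=2r^2$, and let $\mathcal L$ be the family of lines of the form $y=ax+b$ with $a\in\{0,\dots,r-1\}$ and $b\in\{0,\dots,r^2-1\}$. Each such line meets $P$ in exactly $r$ points, and tuning the parameters so that $M=rs$ and $N=r^3$ are balanced (say $M\asymp N$) produces $I=\Theta(Nr)=\Theta(M^{2/3}N^{2/3})$. For part (ii), picking the subfamily of lines of $\mathcal L$ with $a$ in a suitable range exhibits $\Theta(M^2/k^3)$ lines each incident to $\ge k$ points, matching the dominant term; the $M/k$ term is matched trivially by $M/k$ concurrent lines through a single point.

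The main obstacle, and the conceptual heart of the proof, is the crossing lemma used in step one; I would invoke it as a black box (its proof is a short probabilistic deletion argument from Euler's formula $cr(G)\ge E-3V$). The remaining work is bookkeeping: handling lines with $\le 1$ point, verifying the regime split $E\gtrless 4V$, and confirming that the grid construction indeed balances $M$, $N$, and $I$ in the claimed way.
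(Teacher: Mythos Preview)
Your proposal is a correct and standard treatment: Sz\'ekely's crossing-number argument for part~(i), the threshold/dyadic corollary for part~(ii), and the integer-grid construction for tightness are all sound as sketched (modulo routine bookkeeping such as the precise handling of lines with at most one incidence).

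However, there is nothing to compare against: the paper does not prove Theorem~\ref{ST}. It is quoted as a known result from Szemer\'edi and Trotter~\cite{ST83} and used as a black box throughout the analysis. So your write-up supplies a proof where the paper gives none; the approach you chose (Sz\'ekely's) is the modern textbook route and differs from the original cell-decomposition argument of~\cite{ST83}, but since the paper invokes the theorem without proof, any correct argument is acceptable here.
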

\noindent{\bf Proof of Theorem~\ref{main1}.} 
Let $S$ be a set of $n$ points in the plane, and let $U$ denote the set of unit-area triangles spanned by $S$. For any pair of distinct points, $p\neq q \in S$, let $\ell_{pq}$
denote the line through $p$ and $q$. The points $r$ for which the triangle $pqr$ has unit area lie on two lines $\ell_{pq}^-,\ell_{pq}^+$ parallel to $\ell_{pq}$ and at distance $2/|pq|$ from $\ell_{pq}$ on either side. We let
$\ell_{pq}'\in\{\ell_{pq}^-,\ell_{pq}^+\}$ be the line that lies to the left of the vector $\vec{pq}$. 
We then have
$$
|U|=\frac13 \sum_{(p,q)\in S\times S,\; p\neq q}|\ell_{pq}'\cap S|.
$$

It suffices to consider only triangles $pqr$ of $U$, that have the property that at least one of the three lines $\ell_{pq}$, $\ell_{pr}$, $\ell_{qr}$ is incident to at most $n^{1/2}$ points of $S$, because the number of triangles in $U$ that do not have this property is $O(n^{3/2})$. 
Indeed, by Theorem~\ref{ST}(ii), 
there exist at most $O(n^{1/2})$ lines in $\R^2$, such that each contains at least $n^{1/2}$ points of $S$. 
Since every triple of those lines supports (the edges of) at most one triangle (some of the lines might be mutually parallel, 
and some triples might intersect at points that do not belong to $S$), 
these lines support in total at most $O(n^{3/2})$ triangles, and, in particular, at most $O(n^{3/2})$ triangles of $U$. 
Since this number is subsumed in the asserted bound on $|U|$, we can therefore ignore such triangles in our analysis. 
In what follows, $U$ denotes the set of the remaining unit-area triangles.

We charge each of the surviving unit-area triangles $pqr$ to one of its sides, say $pq$, 
such that $\ell_{pq}$ contains at most $n^{1/2}$ points of $S$. 
That is, we have 
$$
|U|\le \sum_{(p,q)\in (S\times S)^*}|\ell_{pq}'\cap S|,
$$
where $(S\times S)^*$ denotes the subset of pairs $(p,q)\in S\times S$, such that $p\neq q$, and the line $\ell_{pq}$ is incident to at most $n^{1/2}$ points of $S$.

A major problem in estimating $|U|$ is that the lines $\ell_{pq}'$, for $p,q\in S$, are not necessarily distinct, 
and the analysis has to take into account the (possibly large) multiplicity of these lines. 
(If the lines were distinct then $|U|$ would be bounded by the number of incidences between $n(n-1)$ lines and $n$ points, 
which is $O(n^2)$ --- see Theorem~\ref{ST}(i).)
Let $L$ denote the collection of lines $\{\ell_{pq}'\mid (p,q)\in (S\times S)^*\}$ (without multiplicity). 
For $\ell\in L$, we define $(S\times S)_{\ell}$ to be the set of all pairs $(p,q)\in S\times S$, with $p\neq q$, and for which $\ell_{pq}'=\ell$. 
We then have
$$
|U|
\le  \sum_{\ell\in L}|\ell\cap S||(S\times S)_\ell|.
$$

Fix some integer parameter $k\le n^{1/2}$, to be set later, and partition $L$ into the sets 
\begin{align*}
&L^-=\{\ell\in L\mid |\ell\cap S|< k\},\\
&L^+=\{\ell\in L\mid k\le |\ell\cap S|\le n/k\},\\
&L^{++}=\{\ell\in L\mid |\ell\cap S|> n/k\}.
\end{align*}
We have
$$
|U| 
\le \sum_{\ell\in L^-}|\ell\cap S||(S\times S)_\ell|+ \sum_{\ell\in L^+}|\ell\cap S||(S\times S)_\ell|+\sum_{\ell\in L^{++}}|\ell\cap S||(S\times S)_\ell|.
$$
The first sum is at most $
k\sum_{\ell\in L^-}|(S\times S)_\ell|\le kn^2$,
because $\sum_{\ell\in L^-}|(S\times S)_\ell|$ is at most $|(S\times S)^*|\le |S\times S|=n^2$.
The same (asymptotic) bound also holds for the the third sum. 
Indeed, since $n/k\ge n^{1/2}$, the number of lines in $L^{++}$ is at most $O(k)$, 
as follows from Theorem~\ref{ST}(ii),
and, for each $\ell\in L^{++}$, we have $|\ell\cap S|\le n$ and $|(S\times S)_\ell|\le n$ (for any $p\in S$, $\ell\in L$, 
there exists at most one point $q\in S$, such that $\ell_{pq}'=\ell$). This yields a total of at most $O(n^2 k)$ unit-area triangles.
It therefore remains to bound the second sum, over $L^+$.

Applying the Cauchy-Schwarz inequality to the second sum, it follows that
$$
|U|\le O(n^2k)+\left(\sum_{\ell\in L^+}|\ell\cap S|^2\right)^{1/2}\left(\sum_{\ell\in L^+}|(S\times S)_\ell|^2\right)^{1/2}.
$$

Let $N_j$ (resp., $N_{\ge j}$), for $k\le j\le n/k$, denote the number of lines $\ell\in L^+$ for which $|\ell\cap S|=j$ (resp., $|\ell\cap S|\ge j$). 
By Theorem~\ref{ST}(ii), $\displaystyle{N_{\ge j}=O\left({n^2}/{j^3}+{n}/{j}\right)}$. Hence 
\begin{align*}
\sum_{\ell\in L^+}|\ell\cap S|^2
& =\sum_{j= k}^{n/k}j^2N_j
\le k^2N_{\ge k}+\sum_{j=k+1}^{n/k}(2j-1)N_{\ge j} \\
& =O\left(\frac{n^2}{k}+nk+\sum_{j=k+1}^{n/k}\left(\frac{n^2}{j^2}+n\right)\right)
=O\left(\frac{n^2}{k}\right)
\end{align*}
(where we used the fact that $k\le n^{1/2}$).
It follows that
$$
|U|=O\left(n^2k+\frac{n}{k^{1/2}}\Big(\sum_{\ell\in L^+} |(S\times S)_\ell|^2\Big)^{1/2}\right).
$$
To estimate the remaining sum, put
$$
Q:=\left\{(p,u,q,v)\in S^4\mid (p,u),(q,v)\in (S\times S)_\ell,~\text{for some}~\ell\in L^+\right\}.
$$
That is, $Q$ consists of all quadruples $(p,u,q,v)$ such that $\ell_{pu}'=\ell_{qv}'\in L^+$, and each of $\ell_{pu},\ell_{qv}$ contains 
at most $n^{1/2}$ points of $S$. 
See Figure~\ref{figs}(a) for an illustration. 
\begin{figure}%
\centering
%\subfloat[][]{\includegraphics[width=0.4\textwidth]{figures/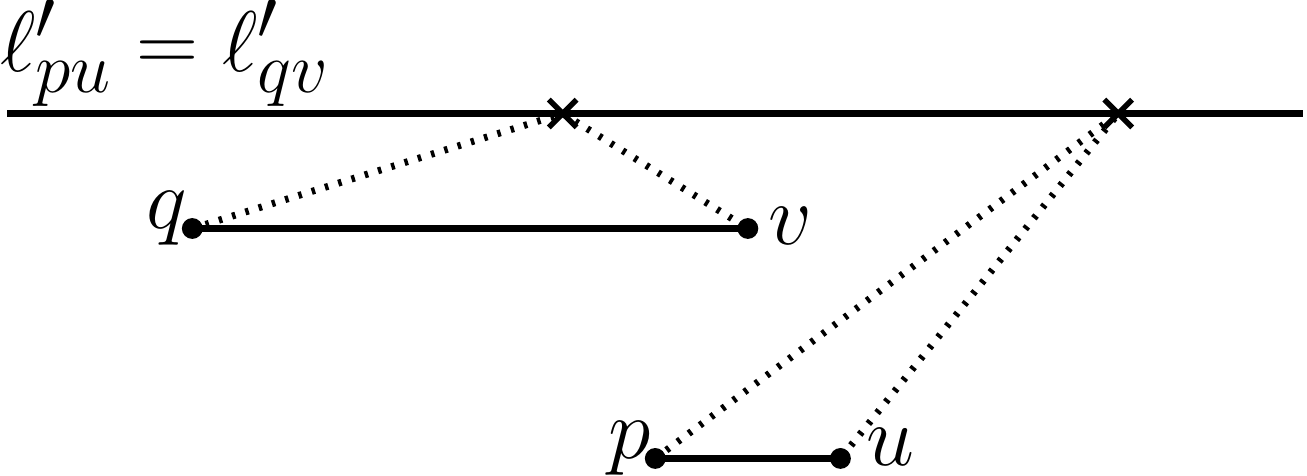}}%
\subfloat[][]{\includegraphics[width=0.4\textwidth]{quadruple.pdf}}%
\qquad
%\subfloat[][]{\includegraphics[width=0.4\textwidth]{figures/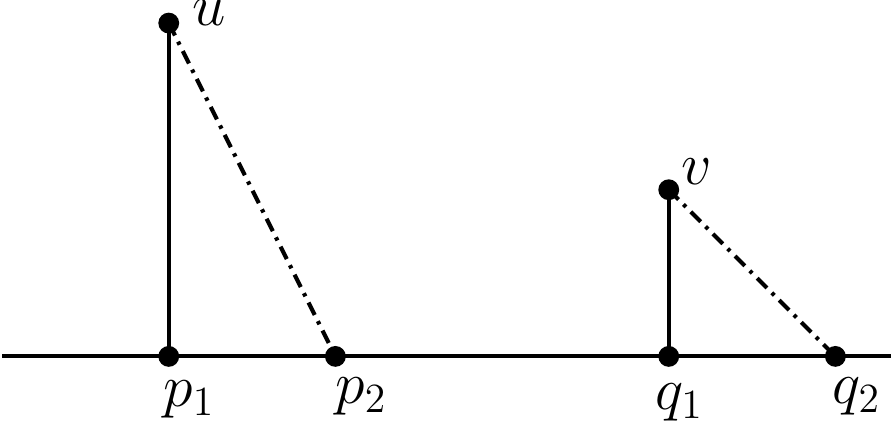}}
\subfloat[][]{\includegraphics[width=0.4\textwidth]{collinear.pdf}}
\caption{(a) A quadruple $(p,u,q,v)$ in $Q$. (b) If $p_1, q_1,p_2,q_2$ are collinear and $|p_1p_2|=|q_1q_2|$ then $\ell_{p_2u},\ell_{q_2v}$ are not parallel to one another, for every $(u,v)\in\sigma_{p_1q_1}\setminus\ell_{p_1q_1}$. Thus, in particular, 
$(u,v)\not\in\sigma_{p_2q_2}$.}%
\label{figs}%
\end{figure}
The above bound on $|U|$ can then be written as
\begin{equation}\label{eq:UQ}
|U|=O\left( n^2k+\frac{n|Q|^{1/2}}{k^{1/2}}\right).
\end{equation}

The main step of the analysis is to establish the following upper bound on $|Q|$.
\begin{prop}\label{prop:Q}
Let $Q$ be as above. Then
$
|Q|=O\left( n^{8/3}\right).
$
\end{prop}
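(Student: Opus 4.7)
The idea is to recast $|Q|$ as the number of incidences between a point set in $\R^4$ and a family of two-dimensional algebraic surfaces, and then invoke the Solymosi--De Zeeuw point--surface incidence bound \cite{SodZ}.

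For each ordered pair $(p,q) \in S\times S$ with $p \ne q$, define
\[
\sigma_{p,q} := \{(u,v) \in \R^2 \times \R^2 \mid \ell'_{pu} = \ell'_{qv}\}.
\]
Unpacking the condition $\ell'_{pu} = \ell'_{qv}$ yields two polynomial relations in the coordinates $(u_1,u_2,v_1,v_2)$: the first equates the directions of $\vec{pu}$ and $\vec{qv}$, and the second matches the perpendicular offsets $2/|pu|$ and $2/|qv|$ from their common line. After clearing denominators, $\sigma_{p,q}$ is a two-dimensional algebraic variety of bounded degree, and by construction
\[
|Q| \;=\; \sum_{\substack{(p,q) \in S\times S \\ p \ne q}} |\sigma_{p,q} \cap (S\times S)|,
\]
so $|Q|$ equals the number of incidences between the $n^2$ points of $S\times S\subset\R^4$ and the family of $O(n^2)$ surfaces $\sigma_{p,q}$. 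Note that the point set is a Cartesian product, which is precisely the setting in which \cite{SodZ} operates.

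The Solymosi--De Zeeuw theorem yields an incidence bound of the form $O(M^{2/3}N^{2/3} + M + N)$ for $M$ Cartesian-product points and $N$ constant-degree surfaces in $\R^4$, provided certain nondegeneracy conditions hold (roughly, that two distinct surfaces intersect in $O(1)$ points and that no positive-dimensional variety absorbs too many of them). With $M = N = n^2$ this is exactly the desired bound $O(n^{8/3})$, so the substantive work is the verification of these hypotheses for the family $\{\sigma_{p,q}\}$, and this is where I expect the main difficulty to lie. There is one genuine source of failure, depicted in Figure~\ref{figs}(b): when $p_1,q_1,p_2,q_2$ are collinear with $|p_1p_2|=|q_1q_2|$, the surfaces $\sigma_{p_1,q_1}$ and $\sigma_{p_2,q_2}$ can coincide along an entire one-parameter family. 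The calculation recorded in the figure observes that any $(u,v)$ in such a spurious coincidence must itself lie on the line $\ell_{p_1 q_1}$, confining the degeneracies to a combinatorially tractable collinear locus.

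It therefore remains to bound the contribution of the degenerate quadruples directly. Since $|Q|$ counts only quadruples with $\ell'_{pu} = \ell'_{qv} \in L^+$, the restriction built into $(S\times S)^*$ forces $|\ell_{pu}\cap S|,\,|\ell_{qv}\cap S| \le n^{1/2}$; hence any quadruple arising from the exceptional configuration is supported on a line of $\R^2$ containing at most $n^{1/2}$ points of $S$. Combining this with Theorem~\ref{ST}(ii), which bounds the number of lines rich in points of $S$, the total contribution of the collinear exceptional quadruples is $O(n^{8/3})$, absorbed by the asserted bound. The remaining (generic) incidences satisfy the hypotheses of \cite{SodZ}, whose application gives $|Q|=O(n^{8/3})$, as required.
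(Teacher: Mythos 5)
Your reduction is the same as the paper's: encode the condition $\ell_{pu}'=\ell_{qv}'$ as a two-dimensional constant-degree surface $\sigma_{pq}\subset\R^4$, view $|Q|$ as incidences between the point set $(S\times S)^*\subset\R^4$ and these $O(n^2)$ surfaces, split off the collinear quadruples, and apply Theorem~\ref{SodZ} with $|\Pi|=|\Sigma|=O(n^2)$ to get $O(n^{8/3})$. Your handling of the degenerate locus is also in order: the collinear quadruples live on lines containing at most $n^{1/2}$ points of $S$ (because $(S\times S)_\ell\subset(S\times S)^*$ for $\ell\in L^+$), and Theorem~\ref{ST}(ii) then bounds their number by $O\left(\sum_{j\le n^{1/2}} j^3 N_{\ge j}\right)=O(n^{5/2})$; the paper gets the sharper $O(n^2\log n)$ by also exploiting $|pu|=|qv|$, but either bound is absorbed.

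The gap is that the two hypotheses of Theorem~\ref{SodZ} --- which constitute essentially all of the technical content of the paper's proof, isolated there as Proposition~\ref{prop:admis} --- are asserted rather than verified. First, you must show each $\sigma_{pq}$ is \emph{slanted}, i.e., that the fibers of both coordinate projections $\rho_1,\rho_2$ restricted to $\sigma_{pq}$ are finite; the paper does this by solving \eqref{surfdef} explicitly for $(z,w)$ in terms of $(x,y)$ (equation \eqref{eqxy}), exhibiting $\sigma_{pq}$ as the graph of an invertible linear rational map. Second, and more substantially, you must show that the \emph{only} way two distinct surfaces can share four or more points of the incidence set is the collinear configuration of Figure~\ref{figs}(b); your proposal presents this as an observation ``recorded in the figure,'' but it is precisely the statement that needs proof. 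The paper establishes it by passing to homogeneous coordinates, interpreting each $\sigma_{pq}$ as the graph of a projective transformation $T_{pq}$ of the plane, using the fact that two distinct projective transformations agreeing at four points must agree at three collinear points and hence along an entire line $\lambda$, and then running a geometric case analysis (on how $\lambda$ meets $\ell_{p_1p_2}$, $\ell_{q_1q_2}$, $\ell_{p_1q_1}$, $\ell_{p_2q_2}$) to conclude that $p_1,q_1,p_2,q_2$ are collinear with $|p_1p_2|=|q_1q_2|$ and that any shared point $(u,v)$ then satisfies $u,v\in\ell_{p_1q_1}$, placing it outside the restricted incidence set $\mathcal I$. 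Without this argument the application of Theorem~\ref{SodZ} is unjustified, so while your skeleton is the right one, the proof is not yet there.
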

The proposition, combined with (\ref{eq:UQ}), implies that 
$|U|=O\left( n^2k+{n^{7/3}}/{k^{1/2}}\right)$,
which, if we choose $k=n^{2/9}$, becomes $|U|=O(n^{20/9})$.
Since the number of triangles that we have discarded is only $O(n^{3/2})$, Theorem~\ref{main1} follows.
\qed\\

\noindent{\bf Proof of Proposition~\ref{prop:Q}.}
Consider first quadruples $(p,u,q,v)\in Q$, with all four points $p,u,q,v$ collinear. 
As is easily checked, in this case $(p,u,q,v)$ must also satisfy $|pu|=|qv|$. It follows that a line $\ell$ in the plane, 
which is incident to at most $j$ points of $S$, can support at most $j^3$ such quadruples. 
By definition, $(S\times S)_\ell\subset (S\times S)^*$ for each $\ell\in L^+$,
so the line $\ell_{pu}=\ell_{qv}$ is incident to at most $n^{1/2}$ points of $S$, 
and it suffices to consider only lines $\ell$ with this property.
Using the preceding notations $N_j$, $N_{\ge j}$, the number of quadruples under consideration is
$$
O\left(\sum_{j\le n^{1/2}}j^3N_j\right)
=O\left(\sum_{j\le n^{1/2}}j^2N_{\ge j}\right)
=O\left(\sum_{j\le n^{1/2}}j^2\cdot\frac{n^2}{j^3}\right)
=O\left(n^2\log n\right).
$$
This is subsumed by the asserted bound on $|Q|$, so, in what follows 
we only consider quadruples $(p,u,q,v)\in Q$, such that $p,u,q,v$ are not collinear.

For convenience, we assume that no pair of points of $S$ share the same $x$- or $y$-coordinate; this can always be enforced by a suitable rotation of the coordinate frame. 
The property that two pairs of $S\times S$ are associated with a common line of $L$ can then be expressed in the following algebraic manner.
\begin{lemma}\label{lem:surfdef}
Let $(p,u,q,v)\in S^4$, and represent $p=(a,b), u=(x,y), q=(c,d)$, and $v=(z,w)$, by their coordinates in $\R^2$. Then $\ell_{pu}'=\ell_{qv}'$ if and only if
\begin{equation}\label{surfdef}
\frac{y-b}{x-a}=\frac{w-d}{z-c}\quad\quad\text{and}\quad\quad
\frac{bx-ay+2}{x-a}=\frac{dz-cw+2}{z-c}.
\end{equation}
\end{lemma}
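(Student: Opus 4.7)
The plan is to derive an explicit equation for $\ell_{pu}'$ (and analogously for $\ell_{qv}'$) and then compare the two equations coefficient by coefficient, using the general-position assumption to keep all denominators nonzero.

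First, I would use the signed-area formula. For an arbitrary third point $r=(X,Y)\in\R^2$, the signed area of the triangle with vertices $p=(a,b)$, $u=(x,y)$, $r$ (in that order) is
$$
\tfrac12\bigl[(x-a)(Y-b)-(X-a)(y-b)\bigr].
$$
Hence the locus of $r$ for which $\mathrm{area}(pur)=1$ is the union of the two parallel lines
$$
(x-a)(Y-b)-(X-a)(y-b)=\pm 2,
$$
which are $\ell_{pu}^+$ and $\ell_{pu}^-$. The standard orientation convention (easily verified on a single example, say $p=(0,0)$, $u=(1,0)$, for which $r$ lies to the left of $\vec{pu}$ iff $Y>0$) identifies the $+2$ branch as the line lying on the left of $\vec{pu}$, i.e., as $\ell_{pu}'$.

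Next, since we have rotated coordinates so that no two points of $S$ share an $x$-coordinate, we have $x\neq a$ and $z\neq c$, so both $\ell_{pu}'$ and $\ell_{qv}'$ are non-vertical. Dividing the equation above by $x-a$ and rearranging puts $\ell_{pu}'$ in slope-intercept form
$$
Y=\frac{y-b}{x-a}\,X+\frac{bx-ay+2}{x-a},
$$
and the same derivation applied to $q,v$ gives the slope-intercept form of $\ell_{qv}'$, with slope $(w-d)/(z-c)$ and intercept $(dz-cw+2)/(z-c)$.

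Finally, two non-vertical lines coincide if and only if their slopes and their $y$-intercepts match, and this equivalence yields exactly the two equations of (\ref{surfdef}). The only delicate point in the whole argument is the sign convention identifying $\ell_{pu}^+$ (rather than $\ell_{pu}^-$) with $\ell_{pu}'$; once that is pinned down by a single-example check, the rest is routine algebra.
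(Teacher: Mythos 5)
Your proof is correct and follows essentially the same route as the paper: both derive the slope $\frac{y-b}{x-a}$ and intercept $\frac{bx-ay+2}{x-a}$ of $\ell_{pu}'$ from the signed-area condition (the paper by writing $\ell_{pu}'=\{(t,\alpha t+\beta)\}$ and matching coefficients of $t$ in the determinant identity, you by rearranging the area-locus equation directly) and then equate slopes and intercepts of the two non-vertical lines. Your explicit single-example check of the orientation convention is a sensible way to pin down the sign that the paper handles implicitly through its choice of determinant value $+1$.
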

\noindent{\bf Proof.}
Let $\alpha,\beta\in\R$ be such that $\ell_{(a,b)(x,y)}'=\{(t,\alpha t+\beta)\mid t\in\R\}$. Then, by the definition of $\ell_{(a,b)(x,y)}'$, we have 
$$
\frac12\left|
\begin{array}{ccc}
a& x& t\\
b& y& \alpha t +\beta\\
1&1&1
\end{array}\right|=1,
$$
or
$$
(b-y-\alpha(a-x))t-\beta(a-x)+ay-bx=2,
$$
for all $t\in\R$.
Thus,
\begin{align*}
\alpha&=\alpha(a,b,x,y)=\frac{y-b}{x-a},\\
\beta&=\beta(a,b,x,y)=\frac{bx-ay+2}{x-a}.
\end{align*}
Then the constraint $\displaystyle{\ell_{(a,b)(x,y)}'\equiv\ell_{(c,d)(z,w)}'}$ can be written as 
\begin{align*} 
\alpha(a,b,x,y)&=\alpha(c,d,z,w),\\
\beta(a,b,x,y)&=\beta(c,d,z,w),
\end{align*}
which is \eqref{surfdef}.
\qed
\medskip

We next transform the problem of estimating $|Q|$ into an incidence problem. 
With each pair $(p=(a,b),q=(c,d))\in (S\times S)^*$, 
we associate the two-dimensional surface $\sigma_{pq}\subset\R^4$ which is the locus of 
all points $(x,y,z,w)\in\R^4$ that satisfy the system (\ref{surfdef}). 
The degree of $\sigma_{pq}$ is at most $4$, being the intersection of two quadratic hypersurfaces.
We let $\Sigma$ denote the set of surfaces 
$$
\Sigma:=\{\sigma_{pq}\mid (p,q)\in (S\times S)^*\}.
$$
For $(p_1,q_1)\neq (p_2,q_2)$, the corresponding surfaces $\sigma_{p_1q_1},\sigma_{p_2q_2}$ are distinct.
The proof of this fact is not difficult, but is somewhat cumbersome, and we therefore omit it,
since our analysis does not use this property.
We also consider the set
$
\Pi:=(S\times S)^*, 
$
regarded as a point set in $\R^4$ (identifying $\R^2\times \R^2\simeq \R^4$).
We have $|\Pi|=|\Sigma|=O(n^2)$. The set $I(\Pi,\Sigma)$, the set of {\em incidences} between $\Pi$ and $\Sigma$, 
is naturally defined as
$$
I(\Pi,\Sigma):=\{ (\pi,\sigma)\in\Pi\times\Sigma \mid \pi\in\sigma \}.
$$
By Lemma \ref{lem:surfdef}, we have
$ (x,y,z,w)\in\sigma_{pq}~\text{if and only if}~\ell_{pu}'=\ell_{qv}'$,
where $u:=(x,y)$ and $v:=(z,w)$. This implies that $|Q|\le |I(\Pi,\Sigma)|$. 

Consider the subcollection $\mathcal I$ of incidences $((x,y,z,w),\si_{pq})\in I(\Pi,\Sigma)$, 
such that $p,q,u:=(x,y),v:=(z,w)$ are non-collinear (as points in $\R^2$). As already argued, the number
of collinear quadruples  in $Q$ is $O(n^2\log n)$, and hence
$|Q|\le |\mathcal I|+O(n^2\log n)$. So to bound $|Q|$ it suffices to obtain an upper bound on $|\mathcal I|$.

For this we use the following recent result of Solymosi and De Zeeuw~\cite{SodZ} (see also the related results in \cite{SoT12,Zah}).
To state it we need the following definition, which is a specialized version of the more general original definition
in~\cite{SodZ}. 
\begin{defn}\label{def:good}
A two-dimensional constant-degree surface $\sigma$ in $\R^4$ is said to be {\em slanted} (the original term used in~\cite{SodZ} is {\em good}), if, 
for every $p\in \R^2$, $\rho_i^{-1}(p)\cap \sigma$ is finite, for $i=1,2$, where $\rho_1$ and $\rho_2$ are the projections of $\R^4$ 
onto its first and last pairs of coordinates, respectively.%\\
\end{defn}

\begin{theorem}[{\bf Solymosi and De Zeeuw~\cite{SodZ}}]\label{SodZ}
Let $S$ be a subset of $\R^2$, and let $\Gamma$ be a finite set of two-dimensional constant-degree slanted surfaces.
Set $\Pi:=S\times S$, and let $\mathcal I\subset I(\Pi,\Gamma)$. Assume that 
for every pair of distinct points $\pi_1,\pi_2\in \Pi$ there are at most $O(1)$ 
surfaces $\sigma\in\Sigma$ such that both pairs
$(\pi_1,\sigma),(\pi_2,\sigma)$ are in $\mathcal I$. Then 
$$
|\mathcal I|=O\left(|\Pi|^{2/3}|\Sigma|^{2/3}+|\Pi|+|\Sigma|\right).
$$
\end{theorem}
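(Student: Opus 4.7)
The plan is to adapt Sz\'ekely's crossing-lemma proof of the Szemer\'edi--Trotter theorem to this 4-dimensional, Cartesian-product setting. The goal is to construct a planar drawing of an auxiliary graph with $|\Pi|$ vertices, $\Omega(|\mathcal I|)-O(|\Sigma|)$ edges, and crossing number $O(|\Sigma|^2)$; the crossing lemma then yields $|\mathcal I|^3 = O(|\Pi|^2 |\Sigma|^2)$, giving the claimed $O(|\Pi|^{2/3}|\Sigma|^{2/3})$ bound (the additive $|\Pi|+|\Sigma|$ terms cover the degenerate regimes where the crossing lemma does not apply).

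I would first use the slanted condition together with the bounded algebraic degree of each $\sigma\in\Sigma$ to decompose $\sigma$ into $O(1)$ \emph{sheets}, each a bounded-degree algebraic graph of a partial function from (a Zariski-open subset of) the first $\R^2$-factor to the second (and symmetrically, over the second factor, via $\rho_2$). After passing to this sheet-refined collection — which enlarges $|\Sigma|$ by only a constant factor, preserves the pair-incidence hypothesis, and changes $|\mathcal I|$ by only a constant factor — I would build an auxiliary graph whose vertex set is $\pi(\Pi)$, for a generic linear projection $\pi:\R^4\to\R^2$ injective on the finite set $\Pi$, and whose edges are $\pi$-images of $1$-dimensional algebraic arcs lying on the sheets and connecting consecutive incidences in a canonical linear order along a chosen $1$-dimensional curve on each sheet.

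The crucial estimate is that this drawing has crossing number $O(|\Sigma|^2)$. For arcs from two distinct sheets, one argues that their $\pi$-images lie on two different bounded-degree $1$-dimensional algebraic curves in $\R^2$; by B\'ezout, any two such curves meet in $O(1)$ points, so each pair of sheets contributes $O(1)$ crossings, for a total of $O(|\Sigma|^2)$. For arcs within a single sheet, the arcs are chosen to lie on a common $1$-dimensional algebraic curve on the sheet, so that consecutive arcs share only endpoints and non-consecutive arcs are disjoint, yielding no same-sheet crossings. The pair-incidence hypothesis enters to guarantee that the arc-system captures a constant fraction of $|\mathcal I|$, so that the edge count of the drawing is $\Omega(|\mathcal I|)-O(|\Sigma|)$ as required by the crossing lemma.

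The main obstacle is the arc-selection step within each sheet: one must exhibit, on each sheet, a bounded-degree $1$-dimensional algebraic curve that contains (a constant fraction of) the sheet's incidences on $\Pi$. In general a finite set of points on a $2$-dimensional algebraic surface need not lie on any bounded-degree $1$-dimensional algebraic subcurve — interpolation degree grows with the number of points — so a na\"ive choice fails. Overcoming this requires a careful, adaptive construction exploiting the slanted condition (which makes each sheet a controlled algebraic correspondence between the two $\R^2$-factors), the Cartesian product structure of $\Pi = S \times S$, and the pair-incidence hypothesis (which precludes pathological alignment of incidences across many sheets). Making this rigorous, and verifying both the edge count and the crossing bound feeding into the crossing lemma, is the delicate technical core of the argument.
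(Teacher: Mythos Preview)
This theorem is not proved in the present paper; it is quoted from~\cite{SodZ} and applied as a black box, so there is no proof here to compare against. Your overall plan---a Sz\'ekely-style crossing-lemma argument---is the right general framework and is indeed what \cite{SodZ} uses. But your proposal has a genuine gap, and you name it yourself: the arc-selection step. You correctly observe that a finite point set on a $2$-dimensional algebraic surface need not lie on any bounded-degree $1$-dimensional algebraic subcurve, and you then do not supply the construction that gets around this. Writing that it ``requires a careful, adaptive construction exploiting the slanted condition, the Cartesian product structure, and the pair-incidence hypothesis'' is a list of the available hypotheses, not a proof sketch; your plan halts exactly where the actual content of~\cite{SodZ} begins.

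Two smaller points. First, the role you assign to the pair-incidence hypothesis is off: in Sz\'ekely-type arguments, the bound on the number of surfaces through two given points of $\Pi$ is what controls \emph{edge multiplicity} in the auxiliary multigraph (so that one may pass to a simple graph at the cost of a constant factor before invoking the crossing lemma), not what ``guarantees that the arc-system captures a constant fraction of $|\mathcal I|$.'' Second, even granting a bounded-degree curve on each sheet, your crossing bound via B\'ezout after a generic projection $\pi:\R^4\to\R^2$ needs the projected curves from distinct sheets to share no irreducible component; this is plausible for generic $\pi$ but is not automatic and requires an argument. To close the main gap you should study how \cite{SodZ} actually exploits the product structure $\Pi=S\times S$ together with the slanted condition (each surface meets every fiber $\rho_i^{-1}(s)$ in $O(1)$ points) to manufacture the planar drawing; the construction there is not the one you outline.
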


To apply Theorem~\ref{SodZ}, we need the following key technical proposition, whose proof is given in the next subsection.
\begin{prop}\label{prop:admis}
Let $\Pi$, $\Sigma$, and $\mathcal I$ be the sets that arise in our setting, as specified above. 
Then, (a) the surfaces of $\Sigma$ are all slanted, and (b) for every pair of distinct points $\pi_1,\pi_2\in \Pi$, there are at most three surfaces $\sigma\in\Sigma$ such that both pairs
$(\pi_1,\sigma),(\pi_2,\sigma)$ are in $\mathcal I$.
\end{prop}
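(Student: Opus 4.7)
For (a), I fix $p=(a,b)$ and $q=(c,d)$ and, for any $(x_0,y_0)\in\R^2$, analyze the fiber $\rho_1^{-1}(x_0,y_0)\cap\sigma_{pq}$. When $x_0=a$ the denominators $(x-a)$ in \eqref{surfdef} vanish, so no point of $\sigma_{pq}$ projects to $(x_0,y_0)$ and the fiber is empty. When $x_0\ne a$, clearing denominators turns \eqref{surfdef} into a pair of equations which are linear in $(z,w)$; their coefficient determinant factors as $(x_0-a)\cdot\bigl[(b-d)x_0+(c-a)y_0+ad-bc+2\bigr]$, the second factor being (up to a constant) $1+\mathrm{sArea}(\triangle p q u)$ for $u=(x_0,y_0)$. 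When this second factor is non-zero, the linear system has a unique solution, contributing at most one point to the fiber. When it vanishes, comparing the right-hand sides of the two equations shows that consistency would force $x_0=a$, contradicting the assumption, so the fiber is empty. Hence every fiber of $\rho_1$ is finite, and by the symmetry of \eqref{surfdef} under $(a,b,x,y)\leftrightarrow(c,d,z,w)$, the same holds for $\rho_2$; thus $\sigma_{pq}$ is slanted.

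For (b), the key idea is to recast the incidence condition so as to count fixed points of a projective transformation of $\P^2$. For each $i=1,2$, the requirement $(u_i,v_i)\in\sigma_{pq}$ turns out to determine $q$ uniquely as a rational function of $p$. Solving \eqref{surfdef} explicitly with $(u_i,v_i)$ fixed and $(p,q)$ variable yields
\[ q\;=\;Q_i(p)\;:=\;v_i + \frac{p-u_i}{1-A_i(p)}, \qquad A_i(p)\;:=\;\mathrm{sArea}(\triangle p\,u_i\,v_i); \]
here $A_i(p)$ is affine in $p$, so both the numerator and denominator of $Q_i$ are affine, and after homogenization $Q_i$ is a projective transformation of $\P^2$, represented by an explicit $3\times3$ matrix $M_i$ whose entries are polynomial in the coordinates of $u_i,v_i$.

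The requirement that $\pi_1,\pi_2\in\sigma_{pq}$ simultaneously thus becomes $Q_1(p)=Q_2(p)$, i.e.\ $[p_x:p_y:1]$ is an eigenvector of the projective transformation $Q_2^{-1}\circ Q_1$, represented by $M_2^{-1}M_1$. A $3\times3$ matrix has at most three eigen-directions in $\P^2$ unless it is a scalar multiple of the identity. Consequently there are at most three projective, hence at most three affine, solutions $p$, each determining a single $q=Q_1(p)$; this gives at most three pairs $(p,q)$, and therefore at most three surfaces $\sigma\in\Sigma$ through both $\pi_1$ and $\pi_2$.

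The delicate case is when $M_2^{-1}M_1$ is a scalar multiple of the identity, i.e.\ $Q_1\equiv Q_2$ as projective maps: then every $p$ gives a candidate pair. This can only occur for a very special arrangement of $\pi_1,\pi_2$, which I expect corresponds to the configuration depicted in Figure~\ref{figs}(b) with $p_1,q_1,p_2,q_2$ collinear and $|p_1 p_2|=|q_1 q_2|$. The figure's observation that such pairs cannot share an incidence off the line $\ell_{p_1 q_1}$, combined with the non-collinearity clause built into $\mathcal{I}$, should rule this case out, preserving the bound of three. Making this degenerate-case analysis airtight is the main obstacle in the proof.
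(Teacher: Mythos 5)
Your part (a) is sound and is essentially the paper's argument: the paper solves \eqref{surfdef} explicitly for $(z,w)$ as a linear rational function of $(x,y)$ (formula \eqref{eqxy}), whose denominator is exactly your second determinant factor, and invokes the symmetry of \eqref{surfdef} for the other projection. Your reduction of (b) to counting fixed points of $Q_2^{-1}\circ Q_1$ is also the paper's idea, stated there in the dual form $|\sigma_{p_1q_1}\cap\sigma_{p_2q_2}\cap\mathcal I|\le 3$ via the same symmetry of the defining condition under swapping $(p,q)\leftrightarrow(u,v)$, and with the two surfaces viewed as graphs of projective transformations $T_{p_iq_i}$ of $\P\R^2$.

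The gap is in the fixed-point count. It is not true that a $3\times 3$ matrix has at most three eigen-directions in $\P\R^2$ unless it is a scalar multiple of the identity: if some eigenvalue has a two-dimensional eigenspace (e.g.\ $\mathrm{diag}(1,1,2)$), the induced projective map fixes an entire line pointwise, plus possibly one further point, while being far from the identity. Equivalently, two \emph{distinct} projective transformations of the plane can agree on a whole line, and this happens as soon as they agree at four points of which three are collinear --- a situation your argument does not exclude. So the degenerate case you must handle is not merely $Q_1\equiv Q_2$ but the much larger family of configurations in which $Q_1$ and $Q_2$ agree on some line $\lambda$. Handling exactly this case is the bulk of the paper's proof of (b): a case analysis on how $\lambda$ meets the relevant lines shows that agreement on a line forces the four defining points to be collinear with equally spaced pairs, and then that every common point of the two surfaces has both of its $\R^2$-components on that same line, so the corresponding incidences are excluded from $\mathcal I$ by its built-in non-collinearity requirement. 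Your closing paragraph gestures at this (via Figure~\ref{figs}(b)) only for the scalar case; as written, the ``at most three'' bound does not follow when $M_2^{-1}M_1$ has a repeated eigenvalue with a two-dimensional eigenspace, and the geometric analysis you defer as ``the main obstacle'' is precisely the content of the proposition.
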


We have $|\Pi|, |\Sigma|=O(n^2)$. Therefore, Theorem~\ref{SodZ} implies that  $|\mathcal I|=O( n^{8/3})$, which completes the proof of Proposition~\ref{prop:Q}
(and, consequently, of Theorem~\ref{main1}).
\qed

%%%%%%%%%%%%%%%%%%%%%%%%%%%%%%%%%%%%%%%%%%%%%%%%%%%%%%%%%%%%%%%%%%%%%%
\subsection{Proof of Proposition~\ref{prop:admis}}\label{sec:prfadmis}
We start by eliminating $z$ and $w$ from (\ref{surfdef}). An
easy calculation shows that
\begin{align}\label{eqxy}
z & = \frac{2(x-a)}{(b-d)(x-a)+(c-a)(y-b)+2}+c,\\
w & = \frac{2(y-b)}{(b-d)(x-a)+(c-a)(y-b)+2} +d. \nonumber
\end{align}
This expresses $\sigma_{pq}$ as the graph of a linear rational function from $\reals^2$ to $\reals^2$
(which is undefined on the line at which the denominator vanishes). Passing to homogeneous coordinates,
replacing $(x,y)$ by $(x_0,x_1,x_2)$ and $(z,w)$ by $(z_0,z_1,z_2)$, we can re-interpret $\sigma_{pq}$ 
as the graph of a projective transformation $T_{pq}:\P\R^2\to\P\R^2$, given by
$$
\left(\begin{array}{c}
z_0 \\ z_1 \\z_2
\end{array}\right) = 
\left(\begin{array}{ccc}
ad-bc+2 & b-d & c-a \\
c(ad-bc)+2(c-a) & c(b-d)+2 & c(c-a) \\
d(ad-bc)+2(d-b) & d(b-d) & d(c-a)+2 
\end{array}\right)
\left(\begin{array}{c}
x_0 \\ x_1 \\x_2
\end{array}\right) .
$$

The representation \eqref{eqxy} implies that every $(x,y)$ defines at most one pair $(z,w)$ such that 
$(x,y,z,w)\in\sigma_{pq}$. By the symmetry of the definition of $\sigma_{pq}$, every pair $(z,w)$ also determines at most one pair $(x,y)$ such that $(x,y,z,w)\in\sigma_{pq}$.
This shows that, for any $p\ne q\in \R^2$, the surface $\sigma_{pq}$ is slanted, which proves Proposition~\ref{prop:admis}(a).

For Proposition~\ref{prop:admis}(b), it is equivalent, by the symmetry of the setup, to prove the following dual statement:
For any $p_1\neq q_1,p_2\neq q_2\in S$, such that $(p_1,q_1)\neq (p_2,q_2)$, we have $|\sigma_{p_1q_1}\cap\sigma_{p_2q_2}\cap\mathcal I|\le 3$.

Let $p_1,q_1,p_2,q_2\in S$ be as above, and assume that $|\sigma_{p_1q_1}\cap\sigma_{p_2q_2}\cap\mathcal I|\ge 4$.
Note that this means that the two projective transformations $T_{p_1q_1}$, $T_{p_2q_2}$
agree in at least four distinct points of the projective plane.
We claim that in this case $\sigma_{p_1q_1}$ and $\sigma_{p_2q_2}$, regarded as graphs of functions
on the affine $xy$-plane, must coincide on some line in that plane.

This is certainly the case if $\sigma_{p_1q_1}$ and $\sigma_{p_2q_2}$ coincide. (As mentioned
earlier, this situation cannot arise, but we include it since we did not provide a proof of its
impossibility.) 
We may thus assume that these surfaces are distinct, which implies that $T_{p_1q_1}$ and $T_{p_2q_2}$
are distinct projective transformations.

As is well known, two distinct projective transformations of the plane cannot agree at four distinct 
points so that no three of them are collinear. Hence, out of the four points at which
$T_{p_1q_1}$ and $T_{p_2q_2}$ agree, three must be collinear.
Denote this triple of points (in the projective $xy$-plane) as $u_1,u_2,u_3$, and their respective images
(in the projective $zw$-plane) as $v_i = T_{p_1q_1}(u_i)=T_{p_2q_2}(u_i)$, for $i=1,2,3$.
Then the line $\lambda$ that contains $u_1$, $u_2$, $u_3$ is mapped by both
$T_{p_1q_1}$ and $T_{p_2q_2}$ to a line $\lambda^*$, and both transformations
coincide on $\lambda$ (since they both map the three distinct points $u_1$, $u_2$, $u_3$ to the
same three respective points $v_1$, $v_2$, $v_3$).

Passing back to the affine setting, let then $\lambda,\lambda^*$ be a pair of lines in the 
$xy$-plane and the $zw$-plane, respectively, such that,
for every $(x,y)\in\lambda$ (other than the point at which the denominator in (\ref{eqxy}) vanishes)
there exists $(z,w)\in\lambda^*$, satisfying
$(x,y,z,w)\in\sigma_{p_1q_1}\cap\sigma_{p_2q_2}$.
We show that in this case $p_1,q_1,p_2,q_2$ are all collinear and $|p_1p_2|=|q_1q_2|$.

We first observe that  $\ell_{p_1p_2}\parallel \ell_{q_1q_2}$. 
Indeed, if each of $\lambda\cap \ell_{p_1p_2}$ and $\lambda\cap \ell_{q_1q_2}$ is either empty or infinite, then we must have
$\ell_{p_1p_2}\parallel\ell_{q_1q_2}$ (since both are parallel to $\lambda$).
Otherwise, assume without loss of generality that $|\ell_{p_1p_2}\cap \lambda|=1$, and let $\xi$ denote the unique point in this intersection.
Let $\eta$ be the point such that $(\xi,\eta)$ satisfies \eqref{eqxy} with respect 
to both surfaces $\sigma_{p_1q_1}$, $\sigma_{p_2q_2}$ (the same point arises for both surfaces because $\xi\in\lambda$).
That is, $\ell_{p_1\xi}'= \ell_{q_1\eta}'$, and $\ell_{p_2\xi}'= \ell_{q_2\eta}'$.
In particular,  $\ell_{p_1\xi}\parallel\ell_{q_1\eta}$, and $\ell_{p_2\xi}\parallel\ell_{q_2\eta}$.
Since, by construction, $\xi\in\ell_{p_1p_2}$, we have $\ell_{p_1\xi}\equiv\ell_{p_2\xi}$, which yields that also
$\ell_{q_1\eta}\parallel\ell_{q_2\eta}$. Thus necessarily $q_1,q_2,\eta$ are collinear, and 
$\ell_{q_1q_2}\parallel\ell_{p_1p_2}$, as claimed.

Assume that at least one of $\ell_{p_1q_1}$, $\ell_{p_2q_2}$ intersects $\lambda$ in exactly one point; say, without loss of generality, it is $\ell_{p_1q_1}$, and let $\xi$ denote the unique point in this intersection. Similar to the argument just made, let $\eta$ be the point such that $(\xi,\eta)$ satisfies \eqref{eqxy} with respect 
to both surfaces $\sigma_{p_1q_1}$, $\sigma_{p_2q_2}$.
Note that since $\xi\in\ell_{p_1q_1}$, we must have $\eta\in\ell_{p_1q_1}$ too, and $|p_1\xi|=|q_1\eta|$.
In particular, since $p_1\neq q_2$, by assumption, we also have $\xi\neq \eta$.
Using the properties $\ell_{p_1p_2}\parallel\ell_{q_1q_2}$ and $(\xi,\eta)\in\sigma_{p_2q_2}$, it follows that the triangles 
$p_1\xi p_2$, $q_1\eta q_2$ are congruent; see Figure~\ref{lemfig}(a). Thus, in particular, $|p_2\xi|=|q_2\eta|$.
Since, by construction, also $\ell_{p_2\xi}'\equiv\ell_{q_2\eta}'$, it follows that $p_2,q_2\in\ell_{\xi\eta}$.
We conclude that in this case $p_1,q_1,p_2,q_2$ are collinear and $|p_1p_2|=|q_1q_2|$.

\begin{figure}%
\centering
\subfloat[][]{\includegraphics[width=0.28\textwidth]{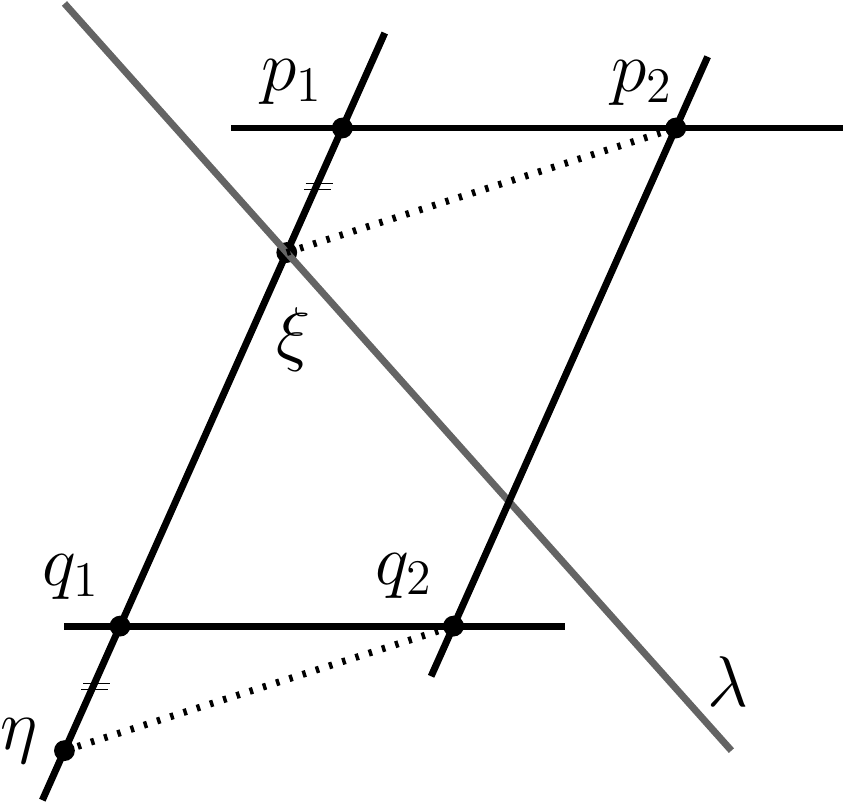}}%
\qquad\qquad\qquad\qquad
\subfloat[][]{\includegraphics[width=0.3\textwidth]{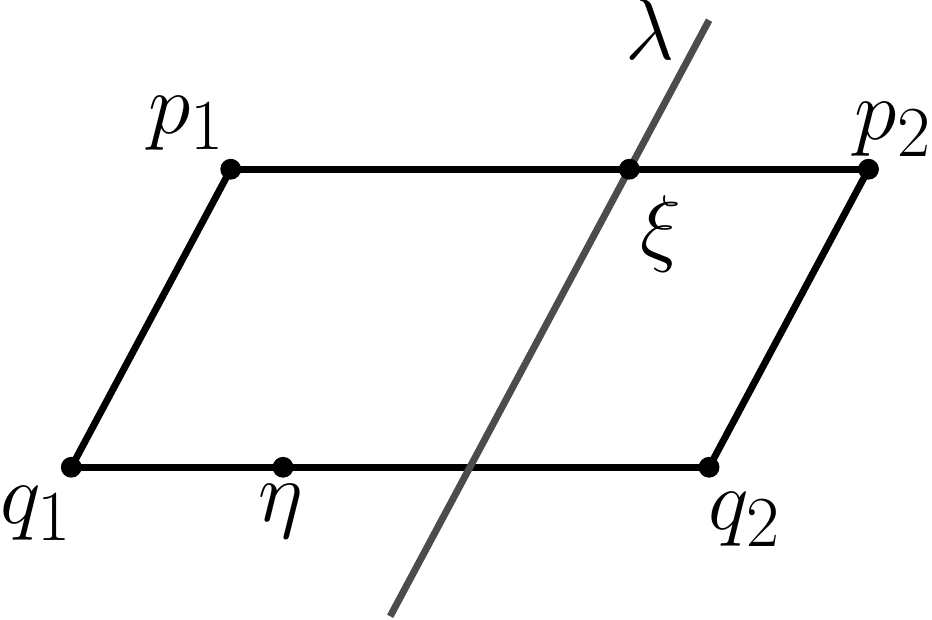}}
\caption{(a) The properties $|p_1\xi|=|q_1\eta|$, $\ell_{p_1p_2}\parallel\ell_{q_1q_2}$, and $(\xi,\eta)\in\sigma_{p_2q_2}$ imply that the triangles 
$p_1\xi p_2$, $q_1\eta q_2$ are congruent, and therefore $\ell_{p_1q_1}$, $\ell_{p_2q_2}$ must be parallel to one another.  (b) $p_1q_1q_2p_2$ is a parallelogram and $\lambda$ is parallel to $\ell_{p_1q_1}$ and $\ell_{p_2q_2}$.}%
\label{lemfig}%
\end{figure}

We are therefore left only with the case where 
each of $\lambda\cap \ell_{p_1q_1}$ and $\lambda\cap \ell_{p_2q_2}$ is either empty or infinite. That is, we have $\ell_{p_1q_1}\parallel\ell_{p_2q_2}$ (since both are parallel to $\lambda$).
As has already been argued, we also have $\ell_{p_1p_2}\parallel\ell_{q_1q_2}$, 
and thus $p_1q_1q_2p_2$ is a parallelogram; see Figure~\ref{lemfig}(b).
In particular, $|p_1p_2|=|q_1q_2|$.
Let $\xi$ be the intersection point of $\ell_{p_1p_2}$ with $\lambda$, and let $\eta$ be the point such that $(\xi,\eta)$ satisfies \eqref{eqxy} with respect 
to both surfaces $\sigma_{p_1q_1}$, $\sigma_{p_2q_2}$. 
By construction $\ell_{p_1\xi}\parallel\ell_{q_1\eta}$ and $\ell_{p_2\xi}(=\ell_{p_1\xi})\parallel\ell_{q_2\eta}$.
Hence $\eta$ must lie on $\ell_{q_1q_2}$. It is now easily checked that the only way in which $(\xi,\eta)$ can lie on both surfaces 
$\sigma_{p_1q_1}$ and $\sigma_{p_2q_2}$ is when $p_1,q_1,p_2,q_2$ are all collinear; see Figure~\ref{lemfig}(b).

To recap, so far we have shown that for $p_1$, $q_1$, $p_2$, and $q_2$ as above, either
$|\sigma_{p_1q_1}\cap\sigma_{p_2q_2}|\le 3$, or $p_1$, $q_1$, $p_2$, and $q_2$ 
are collinear with $|p_1p_2|=|q_1q_2|$.
It can then be shown that, in the latter case, any point 
$(u,v)\in \sigma_{p_1q_1}\cap\sigma_{p_2q_2}$ must satisfy $u, v\in \ell_{p_1q_1}$; see Figure~\ref{figs}(b).
Thus, for a point $\pi\in\R^4$ incident to each of  $\sigma_{p_1q_1}$, $\sigma_{p_2q_2}$, neither of
$(\pi,\sigma_{p_1q_1})$, $(\pi,\sigma_{p_2q_2})$ is in $\mathcal{I}$.
In other words, $\sigma_{p_1q_1}\cap\sigma_{p_2q_2}\cap\mathcal I=\emptyset$ in this case.
This contradiction completes the proof of Proposition~\ref{prop:admis}.  $\hfill\qed$

%
%
%
%
%
%

%%%%%%%%%%%%%%%%%%%%%%%%%%%%%%%%%%%%%%

\section{Unit-area triangles spanned by points on three lines}\label{sec:exa}

%%%%%%%%%%%%%%%%%%%%%%%%%%%%%%%%%%%%%%

In this section we consider the special case where $S$ is contained in the union of three distinct lines $l_1$, $l_2$, $l_3$. 
More precisely, we write $S=S_1\cup S_2\cup S_3$, with $S_i\subset l_i$, for $i=1,2,3$, and we are only interested in the number of unit-area triangles
spanned by triples of points in $S_1\times S_2\times S_3$.
It is easy to see that in this case the number of unit-area triangles of this kind is $O(n^2)$.
Indeed, for any pair of points $p,q\in S_1\times S_2$, the line $\ell_{pq}'$ intersects $l_3$ in at most one point, 
unless $\ell_{pq}'$ coincides with $l_3$. Ignoring situation of the latter kind,
we get a total of $O(n^2)$ unit-area triangles. 
If no two lines among $l_1,l_2, l_3$ are parallel to one another, 
it can be checked that the number of pairs $(p,q)$ such that $\ell_{pq}'=l_3$ is at most a constant,
 thus contributing a total of at most $O(n)$ unit-area triangles. 
For the case where two (or more) lines among $l_1,l_2, l_3$ are parallel, 
the number of unit-area triangles is easily seen to be $O(n^2)$.

In this section we present a rather subtle construction that shows that this bound 
is tight in the worst case, for any triple of distinct lines. 
Instead of just presenting the construction, we spend some time showing its connection to 
a more general setup considered by Elekes and R\'onyai~\cite{ER00} 
(and also, in more generality, by Elekes and Szab\'o~\cite{ESz12}).

Specifically, the main result of this section is the following.
\begin{theorem}\label{main2}
For any triple of distinct lines $l_1, l_2,l_3$ in $\R^2$, and for any integer $n$, there exist subsets 
$S_1\subset l_1$, $S_2\subset l_2$, $S_3\subset l_3$, each of cardinality $\Theta(n)$, 
such that $S_1\times S_2\times S_3$ spans $\Theta(n^2)$ unit-area triangles. 
\end{theorem}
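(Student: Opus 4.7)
The plan is to parametrize each line $l_i$ affinely by $l_i(s_i) = p_i + s_i v_i$ and to write twice the signed area of the triangle with vertices $l_1(s_1), l_2(s_2), l_3(s_3)$ as a trivariate polynomial
\[
F(s_1, s_2, s_3) = \sum_{i<j} A_{ij}\, s_i s_j + \sum_i B_i s_i + D
\]
of degree at most one in each variable. The identity $(l_1-l_3)\times(l_2-l_3) = l_1\times l_2 + l_2\times l_3 + l_3\times l_1$ combined with bilinearity of the planar cross product shows that the coefficient of the trilinear monomial $s_1 s_2 s_3$ vanishes; this structural feature is precisely what will place $F = \pm 2$ in the ``exceptional'' class of the Elekes--R\'onyai theory.

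The strategy is then to situate $F$ inside the Elekes--R\'onyai framework: a generic trilinear equation in three variables has at most $O(n^{2-\delta})$ solutions over $n$-element Cartesian products of reals, and $\Omega(n^2)$ is attainable only for polynomials admitting, after M\"obius reparametrization of each variable, an additive or multiplicative factorization. I would show that the area polynomial always possesses such a factorization and extract the construction from it. Since affine maps preserve ratios of areas, one may normalize $l_1, l_2, l_3$ up to affine equivalence, splitting the analysis into four cases: three concurrent lines, three parallel lines, two parallel lines with a transverse, and three lines in general position.

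The central case is general position. Normalizing to $l_1:y=0$, $l_2:x=0$, $l_3:x+y=1$ gives $F = s_1 s_2 + s_1 s_3 - s_2 s_3 - s_1$. The key step is the affine substitution $s_1 = \tfrac12 + X,\ s_2 = \tfrac12 - Y,\ s_3 = \tfrac12 - Z$, chosen precisely to cancel the linear terms of $F$; a direct expansion reduces $F = \pm 2$ to the symmetric equation $XY + YZ + ZX = K$ with $K\in\{7/4,-9/4\}$. The same symmetric equation also describes the concurrent case after rescaling the parameter on each line through the common point. Inverting $X \mapsto 1/X'$ turns it into $X' + Y' + Z' = K\, X' Y' Z'$, which, up to a rescaling by $\sqrt{|K|}$, is parametrized by the classical identity
\[
\cot\alpha\cot\beta + \cot\beta\cot\gamma + \cot\gamma\cot\alpha = 1 \quad\text{whenever}\quad \alpha + \beta + \gamma \in \pi\mathbb{Z}
\]
when $K>0$, and by the analogous $\coth$-identity with $\alpha+\beta+\gamma = 0$ when $K<0$. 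Taking $\alpha_i = i\pi/n$ (or equally spaced reals in the hyperbolic case), and letting $S_i$ consist of the $n$ corresponding rescaled $\cot$ (resp.\ $\coth$) values, the additive constraint on angles has $\Theta(n^2)$ solutions, each yielding a distinct unit-area triangle.

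The remaining two configurations are handled more directly. For three parallel lines, $F$ is already a purely additive linear form with zero-sum coefficients, and arithmetic progressions in each $S_i$ produce $\Omega(n^2)$ solutions immediately. For two parallel lines and a transverse, the level sets of $F$ in the $(s_1,s_2)$-plane form a pencil of lines through a single point; placing $S_1, S_2$ as a common geometric-progression translate of that point and parametrizing the corresponding slopes by $S_3$ yields $\Omega(n^2)$ incidences. The hard part, I expect, will be the general-position case: verifying that the shift substitution really does reduce $F = \pm 2$ to $XY+YZ+ZX = K$ with $K \neq 0$ for each sign of area, treating both signs of $K$ uniformly via the $\cot$ and $\coth$ parametrizations in tandem, and checking that the resulting sets $S_i$ genuinely have size $\Theta(n)$ and support $\Theta(n^2)$ triples solving the additive constraint.
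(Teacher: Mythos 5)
Your overall route is sound and genuinely different from the paper's in the one case that matters. The paper also splits on the number of parallel pairs, and its constructions for three parallel lines and for one parallel pair are essentially identical to yours (arithmetic progressions with matched differences; geometric progressions translated to the base point $(2,2)$ of the pencil $z=\frac{x-2}{x-y}$). But for the no-two-parallel case the paper writes the constraint as $z=f(x,y)=\frac{xy-\alpha x-2}{y-x}$ and runs the Elekes--R\'onyai machinery for real: it proves a sufficiency version of the Jarai test $\partial_x\partial_y\log|f_x/f_y|\equiv 0$, integrates to find $\varphi,\psi$ as logarithms of M\"obius functions built from the roots of $s^2-\alpha s-2$, concludes $f=h(\varphi(x)\psi(y))$ with explicit M\"obius $h,\varphi,\psi$, and then sets $\varphi(x_i)=2^i$. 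Your shift substitution reducing $F=\pm2$ to the symmetric form $XY+YZ+ZX=K$ (I checked: with $l_3:x+y=1$ one gets $F=-\tfrac14-(XY+YZ+ZX)$, so $K\in\{7/4,-9/4\}$ as you say), followed by the $\cot$/$\coth$ addition identities, reaches the same construction by a shorter and more elementary path --- indeed $\coth t$ is a M\"obius function of $e^{2t}$, so your equally-spaced hyperbolic angles are the paper's geometric progression in disguise. What the paper's longer route buys is the explicit exhibition of this problem as an instance of the Elekes--R\'onyai exceptional forms, which is a stated expository goal of that section; your route buys brevity and avoids Lemma~6 entirely.

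One concrete slip to repair: an area-preserving affine map cannot normalize three lines in general position to $y=0$, $x=0$, $x+y=1$, because the area of the triangle they bound is an invariant. The correct normal form is $y=0$, $x=0$, $x+y=c$ with $c\neq 0$ a free parameter (the paper's $\alpha$); equivalently, if you use a general affine map, the target area $\delta$ becomes a parameter. Redoing your computation with shift $c/2$ in each variable gives $F=-c^2/4-(XY+YZ+ZX)$, hence $K=-c^2/4\mp 2$, and for the orientation $F=+2$ this is always strictly negative, so the $\coth$ parametrization applies uniformly; the other orientation can make $K=0$ (at $c^2=8$), which is exactly the degeneracy you anticipated needing to rule out, and which you avoid by choosing the sign of the area. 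With that correction, and the routine care of excluding the poles of $\cot$/$\coth$ so that each $S_i$ really has $\Theta(n)$ distinct elements, your proof is complete.
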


\noindent{\bf Proof.}
The upper bound has already been established (for any choice of $S_1,S_2,S_3$), so we focus on the lower bound.
We recall that by the area formula for triangles in the plane, if
\begin{equation}\label{area-for}
\frac12\left|
\begin{array}{ccc}
p_x& q_x& r_x\\
p_y& q_y& r_y\\
1&1&1
\end{array}\right|=1,
\end{equation}
then the points $p=(p_x,p_y)$, $q=(q_x,q_y)$ and $r=(r_x,r_y)$ form the 
vertices of a positively oriented unit-area triangle in $\R^2$. 
(Conversely, if $\Delta pqr$ has area 1 then the left-hand side of (\ref{area-for}) has value $\pm 1$, 
depending on the orientation of $(p,q,r)$..)

To establish the lower bound, we distinguish between three cases, depending on the number 
of pairs of parallel lines among $l_1,l_2,l_3$.

\paragraph{The three lines $l_1,l_2,l_3$ are mutually parallel.}

In this case we may assume without loss of generality that they are of the form
\begin{align*}
l_1&=\{(t,0)\mid t\in\R\},\\
l_2&=\{(t,1)\mid t\in\R\},\\
l_3&=\{(t,\alpha)\mid t\in\R\},
\end{align*}
for some $1<\alpha\in\R$. 
(We translate and rotate the coordinate frame so as to place $\ell_1$ at the $x$-axis and then apply an area-preserving 
linear transformation that scales the $x$- and $y$-axes by reciprocal values.)
We set
\begin{align*}
S_1&:=\{(x_i:=\tfrac{i}{1-\alpha},0)\mid i=1,\ldots,n\}\subset l_1,\\
S_2&:=\{(y_j:=\tfrac j\alpha,1)\mid  j=1,\ldots,n \}\subset l_2,\\
S_3&:=\{(z_{ij}:=i+j-2,\alpha)\mid i,j=1,\ldots,n\}\subset l_3.
\end{align*}

Clearly each of the sets $S_i$, $i=1,2,3$, is of cardinality $\Theta(n)$. 
Note that for every pair of indices $1\le i,j\le n$, we have
$
(1-\alpha)x_i+\alpha y_j- z_{ij}=2.
$
By (\ref{area-for}), every such pair $i,j$ corresponds to a unit-area triangle with vertices 
$(x_i,0)\in S_1$, $(y_j,1)\in S_2$ and $(z_{ij},\alpha)\in S_3$. That is, $S_1\times S_2\times S_3$ 
spans $\Omega(n^2)$ unit-area triangles.

%%%%%%%%%% two parallel
\paragraph{There is exactly one pair of parallel lines among $l_1,l_2,l_3$.} 
Using an area-preserving affine transformation\footnote{In more generality than the transformation used in the first case, 
these are linear transformations with determinant $\pm 1$.} of $\R^2$ (and possibly re-indexing the lines), we may assume that
\begin{align*}
l_1&=\{(t,0)\mid t\in\R\},\\
l_2&=\{(t,1)\mid t\in\R\},\\
l_3&=\{(0,t)\mid t\in\R\}.
\end{align*}

We claim that in this case the sets
\begin{align*}
S_1&:=\{(x_i:=2^i+2,0)\mid i=1,\ldots,n\}\subset l_1,\\
S_2&:=\{(y_j:=2^j+2,1)\mid  j=1,\ldots,n \}\subset l_2,\\
S_3&:=\{(0,z_{ij}:=\tfrac1{1-2^{j-i}})\mid i,j=1,\ldots,n,\;\;i\neq j\}\subset l_3,
\end{align*}
span $\Omega(n^2)$ unit-area triangles. As before, $S_1$, $S_2$ and $S_3$ are each of cardinality $\Theta(n)$.

Using (\ref{area-for}), the triangle spanned by $(x_i,0)$, $(y_j,1)$, and $(0,z_{ij})$ has unit area if 
$$
\frac12\left|
\begin{array}{ccc}
x_i& y_j& 0\\
0& 1&z_{ij}\\
1&1&1
\end{array}\right|= 1,
$$
or
$$
\frac{x_i-z_{ij}(x_i-y_j)}2=1,
$$
or
$$
z_{ij}=\frac{x_i-2}{x_i-y_j}=\frac1{1-\frac{y_j-2}{x_i-2}}.
$$
Since the latter holds for every $1\le i\neq j\le n$, we get $\Omega(n^2)$ unit-area triangles, as claimed.

\paragraph{No pair of lines among $l_1,l_2,l_3$ are parallel.}
This is the most involved case.
Using an area-preserving affine transformation of $\R^2$ (that is, a linear map with determinant $\pm 1$ and a translation), we may assume that the lines are given by
\begin{align*}
l_1&=\{(t,0)\mid t\in\R\},\\
l_2&=\{(0,t)\mid t\in\R\},\\
l_3&=\{(t, -t+\alpha)\mid t\in\R\},
\end{align*}
for some $\alpha\in\R$.
By (\ref{area-for}) once again, 
the points $(x,0)\in l_1$, $(0,y)\in l_2$, and $(z,-z+\alpha)\in l_3$ span a unit-area triangle if
$$
\frac12\left|
\begin{array}{ccc}
x& 0& z\\
0& y& -z+\alpha\\
1&1&1
\end{array}\right|=1,
$$
or
$$
z=f(x,y):=\frac{xy-\alpha x- 2}{y-x}.
$$
Thus it suffices to find sets $X,Y,Z\subset\R$, each of cardinality $\Theta(n)$, such that
$$
\big|\{(x,y,z)\in X\times Y\times Z\mid z=f(x,y)\}\big|=\Omega(n^2);
$$
then the sets
\begin{align*}
S_1&:=\{(x,0)\mid x\in X\}\subset l_1,\\
S_2&:=\{(0,y)\mid y\in Y\}\subset l_2,\\
S_3&:=\{(z,-z+\alpha)\mid z\in Z\}\subset l_3,
\end{align*}
are such that $S_1\times S_2\times S_3$ spans $\Omega(n^2)$ unit-area triangles.

%--------------------------------------------------
\paragraph{The construction of $S_1,S_2, S_3$: General context.}
%--------------------------------------------------
As mentioned at the beginning of this section, rather than stating what $S_1,S_2,S_3$ are, we present  the machinery that
we have used for their construction, thereby demonstrating that this problem is a special case of the theory
of Elekes and R\'onyai~\cite{ER00}; we also refer the reader to the more recent related studies~\cite{ESz12,RSS1,RSdZ}.

One of the main results of Elekes and R\'onyai is the following.
(Note that the bound in (i) has recently been improved to $O(n^{11/6})$ in \cite{RSS1,RSdZ}.)
\begin{theorem}[{\bf Elekes and R\'onyai~\cite{ER00}}]\label{ER}
Let $f(x,y)$ be a bivariate real rational function. 
Then one of the following holds.\\
(i) For any triple of sets $A,B,C\subset \R$, each of size $n$,
$$
\big|\big\{(x,y,z)\in A\times B\times C\mid z=f(x,y)\big\}\big|=o(n^2).
$$
(ii) There exist univariate real rational functions $h,\varphi,\psi$, such that $f$ has one of the forms
\begin{align*}
f(x,y)&=h(\varphi(x)+\psi(y)),\\
f(x,y)&=h(\varphi(x)\psi(y)),\\
f(x,y)&=h\left(\tfrac{\varphi(x)+\psi(y)}{1-\varphi(x)\psi(y)}\right).
\end{align*}
\end{theorem}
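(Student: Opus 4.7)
I would structure the proof as a contrapositive: assume case (i) fails, i.e., there is a constant $c_0>0$ and arbitrarily large $n$ together with sets $A,B,C\subset\R$ of size $n$ such that $|\{(x,y,z)\in A\times B\times C:z=f(x,y)\}|\ge c_0 n^2$, and deduce one of the three algebraic forms for $f$. The argument then has two main stages: (I) extracting an exact algebraic identity satisfied by $f$ from the combinatorial lower bound, and (II) classifying the rational functions $f$ satisfying that identity.

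For stage (I), I would recast the hypothesis as an incidence problem. Each $z\in C$ determines a constant-degree level curve $\gamma_z:=\{(x,y)\in\R^2:f(x,y)=z\}$, and the hypothesis asserts at least $c_0 n^2$ incidences between the grid $A\times B$ and the family $\{\gamma_z\}_{z\in C}$. A direct Pach--Sharir point/pseudo-line incidence bound only gives $O(n^2)$, which is consistent with the hypothesis, so no contradiction is obtained at that level. The Elekes--R\'onyai refinement is to look, for each pair $z_1\ne z_2\in C$, at the birational transition map $\tau_{z_1,z_2}$ sending a point $(x,y_1)\in\gamma_{z_1}$ to the (generically unique) point $(x,y_2)\in\gamma_{z_2}$ with the same $x$-coordinate. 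A pigeonhole argument on the $\Omega(n^2)$ triples forces many of these transition maps to be compatible on many $x$-values, and by rigidity of algebraic maps of bounded degree the one-parameter family $\{\tau_{z_1,z_2}\}$ must collapse to a low-dimensional one. Differentiating the resulting compatibility relation produces the Elekes--R\'onyai separability identity $\partial_x\partial_y \log(f_x/f_y)=0$, equivalently $f_x/f_y=F(x)G(y)$ for univariate rational $F,G$.

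For stage (II), I would solve this PDE by the method of characteristics. The separability condition says that, after a change of variables $\tilde x:=\int F(x)\,dx$, $\tilde y:=\int \frac{1}{G(y)}\,dy$, the level sets of $f$ are invariant under a one-parameter group of analytic symmetries of $\R^2$. By the classification of connected one-dimensional real algebraic groups, this group is (up to isogeny) $(\R,+)$, $(\R^*,\cdot)$, or $SO(2)$; in each case the group law produces exactly one of the listed normal forms: $\varphi(x)+\psi(y)$ for $(\R,+)$, $\varphi(x)\psi(y)$ for $(\R^*,\cdot)$, and the tangent-addition expression $(\varphi(x)+\psi(y))/(1-\varphi(x)\psi(y))$ for $SO(2)$. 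Composing with the univariate rational function $h$ that absorbs the outer change of variables puts $f$ into the asserted form.

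The main obstacle I anticipate is stage (I) --- the quantitative-to-qualitative passage from $\Omega(n^2)$ grid triples to an \emph{exact} algebraic identity. This requires either a careful asymptotic/limiting argument or the more refined algebraic-geometric framework later developed by Elekes and Szab\'o~\cite{ESz12}; the combinatorial bookkeeping needed to rule out lower-order error terms when deriving the PDE is delicate and is the technical heart of the theorem.
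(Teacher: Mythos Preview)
The paper does not prove Theorem~\ref{ER}; it is quoted verbatim as a result of Elekes and R\'onyai~\cite{ER00} and used only as a black box to motivate the construction in Section~\ref{sec:exa}. There is therefore no ``paper's own proof'' to compare your proposal against. The only related argument the paper supplies is Lemma~\ref{lem:Jarai}, which is the easy (sufficiency) direction of your Stage~(II): the separability PDE $\partial_x\partial_y\log|f_x/f_y|=0$ implies a local representation $f=h(\varphi(x)+\psi(y))$ with twice-differentiable (not rational) $h,\varphi,\psi$.

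As for the proposal on its own merits: the overall architecture (contrapositive; combinatorics $\Rightarrow$ exact identity $\Rightarrow$ classification) matches the Elekes--R\'onyai strategy, and you correctly isolate the separability identity as the pivot. Two points deserve tightening. First, in Stage~(II) you pass to primitives $\tilde x=\int F(x)\,dx$, $\tilde y=\int G(y)^{-1}\,dy$; for rational $F,G$ these primitives are in general transcendental (partial fractions produce rational, logarithmic, and arctangent pieces), and it is precisely this trichotomy---not an abstract classification of one-dimensional algebraic groups---that yields the three listed normal forms with \emph{rational} $h,\varphi,\psi$. Your group-theoretic gloss is suggestive but does not by itself explain why the symmetry group is algebraic rather than merely Lie, nor why mixed primitives cannot occur; this is where the actual work in \cite{ER00} lies. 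Second, your Stage~(I) sketch (transition maps $\tau_{z_1,z_2}$, pigeonhole, rigidity of bounded-degree maps) is along the right lines but, as you acknowledge, is not a proof: the step from ``agreement on many $x$-values'' to an identically satisfied algebraic relation needs a genuine quantitative-to-generic argument, and the jump from that relation to the specific PDE is not spelled out.
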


Our problem is thus a special instance of the context in Theorem~\ref{ER}. 
Specifically, we claim that 
$\displaystyle{
f(x,y)=\frac{xy-\alpha x-2}{y-x}}
$ satisfies condition (ii) of the theorem, which in turn will lead to the (natural)
construction of the desired sets $S_1,S_2,S_3$ (see below for details).

So we set the task of describing a necessary and sufficient condition 
that a real bivariate (twice differentiable) function $F(x,y)$ is locally\footnote{Note that such
a local representation of $F$ allows one to construct sets $A,B,C$ showing that property (i) of Theorem~\ref{ER} does not hold for $F$, i.e., sets such that there are $\Theta(n^2)$ solutions of $z=F(x,y)$ in $A\times B\times C$. This, using Theorem~\ref{ER}, implies the validity of property (ii) (globally, and with rational functions).}  
of the form $F(x,y)=h(\vphi(x)+\psi(y))$, for 
suitable univariate twice differentiable functions $h,\varphi,\psi$ (not necessarily rational functions).
This condition is presented in~\cite{ER00} where its (rather straightforward) necessity is argued. 
It is mentioned in \cite{ER00} that the sufficiency of this test was observed by A. Jarai Jr.~(apparently 
in an unpublished communication). Since no proof is provided in~\cite{ER00}, 
we present here a proof, for the sake of completeness.
\begin{lemma}\label{lem:Jarai}
Let $F(x,y)$ be a bivariate twice-differentiable real function, and assume that neither of $F_x,F_y$ is identically zero. 
Let $D(F)\subset\R^2$ denote the domain of definition of $F$, and let $U$ be a connected component of the relatively
open set $D(F)\setminus \big(\{F_y=0\}\cup\{F_x=0\}\big)\subset \R^2$. 
We let
$
q(x,y):=F_x/F_y,
$
which is defined, with a constant sign, over $U$.
Then
\begin{equation}\label{test}
\frac{\partial^2(\log|q(x,y)|)}{\partial x\partial y}\equiv 0
\end{equation}
over $U$ if and only if $F$, restricted to $U$, is of the form
\begin{equation}\label{decomp}
F(x,y)= h(\vphi(x)+\psi(y)),
\end{equation}
for some (twice-differentiable) univariate real functions $\vphi$, $\psi$, and $h$. 
\end{lemma}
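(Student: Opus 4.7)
The necessity direction is a routine computation. If $F(x,y) = h(\varphi(x)+\psi(y))$, then $F_x = h'\cdot\varphi'(x)$ and $F_y = h'\cdot\psi'(y)$ (with $h'$ evaluated at $\varphi(x)+\psi(y)$), so $q(x,y) = \varphi'(x)/\psi'(y)$ and $\log|q(x,y)| = \log|\varphi'(x)| - \log|\psi'(y)|$, whose mixed partial vanishes identically.

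For the sufficiency direction, the plan is to reverse-engineer the decomposition by interpreting $F_x = q\,F_y$ as a first-order PDE whose characteristics can be integrated explicitly. Integrating the hypothesis (\ref{test}) on the connected open set $U$ gives $\log|q(x,y)| = A(x)+B(y)$ for some twice-differentiable $A,B$, and since $q$ has constant sign on $U$ this yields a factorization
\[
q(x,y)=\frac{F_x(x,y)}{F_y(x,y)} = a(x)\,b(y),
\]
with $a,b$ twice differentiable and nowhere zero on the respective projections of $U$. The level curves of $F$ in $U$ therefore have slope $dy/dx=-a(x)\,b(y)$, a separable ODE whose integral curves are the level sets of
\[
G(x,y):=\varphi(x)+\psi(y),\qquad \text{where } \varphi'(x)=a(x),\ \psi'(y)=1/b(y).
\]

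It then remains to promote the coincidence of level curves of $F$ and $G$ to the functional identity $F=h\circ G$ on $U$. The key observation is that $\nabla F$ and $\nabla G$ are both nonvanishing on $U$ and everywhere parallel (both are perpendicular to the common tangent of the shared level curves), so the Jacobian of the map $(F,G)\colon U\to\R^2$ vanishes identically; combined with connectedness of $U$ and the submersion property of $G$, this forces $F=h\circ G$ on $U$. The function $h$ is twice differentiable because, along any horizontal slice $\{x=x_0\}\cap U$, one has $h(t)=F\bigl(x_0,\psi^{-1}(t-\varphi(x_0))\bigr)$, and $\psi^{-1}$ is twice differentiable since $\psi'=1/b$ is bounded away from zero on the relevant interval.

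The main delicate step is precisely the last one: upgrading local functional dependence (immediate from the rank drop) to the global factorization $F=h\circ G$ on all of $U$. Local factorization is automatic from the constant-rank theorem, but one must verify that two points of $U$ carrying the same $G$-value produce the same $F$-value; this is a monodromy-type issue which is resolved here by the strict monotonicity of $\varphi$ and $\psi$ on the projected intervals together with the connectedness of $U$, which together allow any two such points to be linked by a path along which the locally defined $h$ propagates consistently.
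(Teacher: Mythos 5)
Your proposal is correct and its first half coincides with the paper's argument: integrate the hypothesis \eqref{test} on $U$ to separate $\log|q|$ additively, conclude $q=F_x/F_y=\vphi'(x)/\psi'(y)$ with $\vphi,\psi$ obtained by integrating nonvanishing (hence sign-definite) functions, so that $\vphi,\psi$ are strictly monotone. Where you diverge is the final step. The paper passes to the new coordinates $u=\vphi(x)+\psi(y)$, $v=\vphi(x)-\psi(y)$ and computes via the chain rule that $F_v\equiv 0$, which immediately yields $F=h(u)$; you instead argue through the separable ODE for the level curves of $F$, the proportionality $\nabla F\parallel\nabla G$ for $G=\vphi+\psi$, and a vanishing-Jacobian/functional-dependence argument. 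The two mechanisms are equivalent in content, but the paper's change of variables is cleaner precisely at the point you flag as delicate: it converts the monodromy question into the statement ``$F_v\equiv 0$ implies $F$ depends only on $u$.'' Be aware that your stated resolution of the monodromy issue is not actually sufficient as phrased --- strict monotonicity of $\vphi,\psi$ plus connectedness of $U$ does \emph{not} force the level sets of $G$ restricted to $U$ to be connected (take $G=x+y$ and $U$ a horseshoe-shaped region), so ``propagating $h$ along a path'' can fail. The honest statement is that both your argument and the paper's implicitly assume $U$ is nice enough (e.g., a product of intervals, or at least a domain whose slices in the relevant directions are connected) for ``vanishing mixed partial $\Rightarrow$ additive separation'' and ``$F_v\equiv0\Rightarrow F=h(u)$'' to hold globally; since the lemma is only used locally to guide a construction, this is harmless, but you should either restrict to such a $U$ or not claim the global monodromy issue is resolved.
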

\noindent{\bf Proof.}
We show only sufficiency of the condition (\ref{test}), as its necessity can easily be verified (as argued in \cite{ER00}). 
Setting $g(x,y):=\ln|q(x,y)|$, equation (\ref{test}) becomes
$$
\frac{\partial^2g}{\partial x\partial y}\equiv 0,
$$
and then clearly $g$ must have the form
$$
g(x,y)=g_1(x)-g_2(y),
$$
for suitable differentiable univariate functions $g_1,g_2$. That is
$$
\ln|q(x,y)|=g_1(x)-g_2(y), 
$$
or 
$$
q(x,y)=\pm e^{g_1(x)}/e^{g_2(y)}.
$$
That is,
\begin{equation}\label{eqjarai}
F_x/F_y=\vphi'(x)/\psi'(y),
\end{equation}
where 
$\displaystyle \vphi(x):=\pm \int e^{g_1(x)}dx$ and $\displaystyle  \psi(y):=\int e^{g_2(y)}dy$ 
(the arbitrary constants in these indefinite integrals clearly do not matter). 
Note that $\varphi$ and $\psi$ are twice differentiable strictly monotone functions, and are thus injective.

Next, we express the function $F$ in terms of new coordinates $(u,v)$, given by
\begin{align*}u&=\vphi(x)+\psi(y),\\
v&=\vphi(x)-\psi(y),
\end{align*}
where $(u,v)$ range over the image of $U$ under this transformation; since $\varphi$ and $\psi$ are injections, the above system is invertible.
Then by the chain rule we have
\begin{align*}
F_x&=F_u u_x+F_v v_x=\vphi'(x)(F_u+F_v)\\
F_y&=F_uu_y+F_vv_y=\psi'(y)(F_u-F_v),
\end{align*}
or
$$
\frac{F_x}{\vphi'(x)}=F_u+F_v,\quad 
\frac{F_y}{\psi'(y)}=F_u-F_v,
$$
and thus
$$
\frac{F_x}{\vphi'(x)}-\frac{F_y}{\psi'(y)}\equiv2F_v.
$$
Using (\ref{eqjarai}), the last equation is
$
F_v\equiv 0.
$
This means that $F$ depends only on the variable $u$, so it has the form
$
F(x,y)=h(\vphi(x)+\psi(y)),
$
as claimed. \qed

%--------------------------------------------------------------------
\paragraph{The construction of $S_1, S_2, S_3$: Specifics.}
%--------------------------------------------------

We next apply Lemma~\ref{lem:Jarai} to our specific function 
$f(x,y)=\frac{xy-\alpha x- 2}{y-x}$.
In what follows we fix a connected open set $U\subset D(f)\setminus\big(\{f_x=0\}\cup \{f_y=0\}\big)$, and restrict 
the analysis only to points $(x,y)\in U$.
We have
$$
f_x = \frac{y^2-\alpha y-2}{(y-x)^2} ,\quad\text{and}\quad f_y = \frac{-x^2+\alpha x+2}{(y-x)^2} .
$$
By assumption, the numerators are nonzero and of constant signs, and the denominator is nonzero, over $U$.
In particular, we have
\begin{equation} \label{fxfy}
\frac{f_x}{f_y} = \frac{(-x^2+\alpha x+2)^{-1}} {(y^2-\alpha y-2)^{-1}} .
\end{equation} 
That is, without explicitly testing that \eqref{test} holds, we see that $f_x/f_y$ 
has the form in \eqref{eqjarai}.
Hence Lemma~\ref{lem:Jarai} implies that $f(x,y)$ can be written as
$
\displaystyle f(x,y) = h(\varphi(x)+\psi(y)) ,
$
for suitable twice-differentiable univariate functions $\varphi$, $\psi$, and $h$,
where $\varphi$ and $\psi$ are given (up to additive constants) by
\begin{align*}
\varphi'(x)  &= -\frac{1}{x^2-\alpha x-2}, \\
\psi'(y)  &= \frac{1}{y^2-\alpha y-2} .
\end{align*}
As explained above, this already implies that $f$ satisfies property (ii) of Theorem~\ref{ER}. 

Straightforward integration of these expressions yields that, up to a common multiplicative factor, which can be dropped,
we have\footnote{Note also that $f$ is defined over $y\neq x$, whereas in our derivation we also had to exclude $\{f_x=0\}\cup\{f_y=0\}$,
i.e. $\{x=s_1\}\cup\{x=s_2\}\cup\{y=s_1\}\cup\{y=s_2\}.$ Nevertheless, the final expression coincides with $f$ also over these excluded lines.}
\begin{align*}
\varphi(x)  &= \ln \left| \frac{x-s_2}{x-s_1} \right|,\\
\psi(y)  &= \ln \left| \frac{y-s_1}{y-s_2} \right|,
\end{align*}
where $s_1$, $s_2$ are the two real roots of $s^2-\alpha s-2=0$.

We conclude that $
\displaystyle f(x,y) = \frac{xy-\alpha x-2}{y-x} 
$
is a function of
\begin{align*}
\varphi(x) + \psi(y) & = 
\ln \left| \frac{x-s_2}{x-s_1} \right| 
+ \ln \left| \frac{y-s_1}{y-s_2} \right| \\
 &=
\ln { \left| \frac{x-s_2}{x-s_1} \right| }\cdot
{ \left| \frac{y-s_1}{y-s_2}\right| } ,
\end{align*}
or, rather, a function of 
$\displaystyle
u = { \frac{x-s_2}{x-s_1} }
\cdot{ \frac{y-s_1}{y-s_2} } .
$
A tedious calculation, which we omit, 
shows that 
$$
f(x,y) = \frac{s_2-s_1u}{1-u} ,
$$
confirming that $f$ does indeed have one of the special forms in Theorem~\ref{ER} above.
That is,
$$
f(x,y)=h(\vphi(x)\psi(y)),
$$
where $h,\vphi,\psi$ are the rational functions 
$$
h(u)=\frac{s_2-s_1u}{1-u},\quad
\vphi(x)=\frac{x-s_2}{x-s_1},\quad
\psi(y)=\frac{y-s_1}{y-s_2}
$$
(these are not the $\vphi,\psi$ in the derivation above).

We then choose points $x_1,\ldots,x_n,y_1,\ldots, y_n\in\R$ such that
$$
\frac{x_i-s_2}{x_i-s_1}=\frac{y_i-s_2}{y_i-s_1}=2^i,
$$
or
$$ 
x_i=y_i=\frac{2^is_1-s_2}{2^i-1},
$$
for $i=1,\ldots,n$, and let $X:=\{x_1,\ldots,x_n\}$ and $Y:=\{y_1,\ldots, y_n\}$. 
For $x=x_i$, $y=y_j$, the corresponding value of $u$ is $2^{i-j}$.
Hence, setting 
$$
Z:=\big\{f(x_i,y_j)\mid 1\le i,j\le n\big\}
=\left\{\frac{s_2-s_1\cdot 2^{i-j}}{1-2^{i-j}}\mid 1\le i,j\le n\right\},
$$
which is clearly also of size $\Theta(n)$, completes the proof. $\hfill\qed$

%%%%%%%%%%%%%%%%%%%%%%%%%%%%%%%%%%%%%%%%%%%%%%%%%%%%%%%%

\section{Unit-area triangles in convex grids}\label{sec:conv}

A set $X=\{x_1,\ldots,x_n\}$, with $x_1<x_2<\cdots<x_n$, of real numbers is said to be \emph{convex} if
$x_{i+1}-x_i>x_i-x_{i-1}$,
for every $i=2,\ldots,n-1$. See \cite{ENR,SS11} for more details and properties  of convex sets.

In this section we establish the following improvement of Theorem~\ref{main1} for convex grids.
\begin{theorem}\label{main3}
Let $S=A\times B$, where $A,B\subset \R$ are convex sets of size $n^{1/2}$ each. Then the number of unit-area triangles spanned by the points of $S$ is $O(n^{31/14})$. 
\end{theorem}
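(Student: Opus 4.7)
The proof will follow the same framework as that of Theorem~\ref{main1}, with the convex structure of $A$ and $B$ exploited to sharpen the key estimate.

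First, I observe that in a grid $S = A \times B$ with $|A| = |B| = n^{1/2}$, every line in the plane meets $S$ in at most $n^{1/2}$ points: each element of $A$ corresponds to at most one element of $B$ on a given non-vertical line, and vertical or horizontal lines meet $S$ in at most $|B|$ or $|A|$ points. In the proof of Theorem~\ref{main1} this means the rich-line class $L^{++}$ is empty as soon as the threshold $k$ satisfies $k < n^{1/2}$, so the partitioning argument simplifies and gives
\[
|U| = O(n^2 k) + O\!\left(\frac{n}{k^{1/2}} \cdot |Q|^{1/2}\right),
\]
where $Q$ is the set of quadruples $(p,u,q,v) \in S^4$ for which $\ell_{pu}' = \ell_{qv}'$.

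The heart of the argument is an improved bound $|Q| = O(n^{37/14})$ for convex grids. To obtain it, I translate the defining conditions of $Q$ given by equations~\eqref{surfdef} into an additive-combinatorial problem involving the difference sets $A-A$ and $B-B$. Writing $p=(a,b), u=(x,y), q=(c,d), v=(z,w)$ with $a,c,x,z \in A$ and $b,d,y,w \in B$, the slope condition $(y-b)/(x-a) = (w-d)/(z-c)$ couples the differences $x-a,\, y-b,\, z-c,\, w-d$ multiplicatively, while the second equation of~\eqref{surfdef} further constrains the intercept parameter $bx-ay+2$ versus $dz-cw+2$. After algebraic manipulation, the count of such quadruples can be organized as an energy-type sum over $A-A$ and $B-B$, that is, a sum of higher moments of the representation functions $r_{A-A}$ and $r_{B-B}$. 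Degenerate quadruples (those with collinear points) will be handled separately, as in the proof of Proposition~\ref{prop:Q}. The convexity hypothesis now enters through the Schoen--Shkredov bound~\cite{SS11}, which controls precisely these higher-moment expressions for convex sets and yields the claimed estimate on $|Q|$.

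Finally, substituting $|Q| = O(n^{37/14})$ into the bound above and optimizing $k$ (the optimal choice being $k = n^{3/14}$) gives $|U| = O(n^{2+3/14}) = O(n^{31/14})$, as required. The main technical obstacle is the second step: the system~\eqref{surfdef} mixes additive differences and multiplicative ratios across both factors $A$ and $B$, so the passage to a clean moment sum of representation functions on the difference sets requires care in isolating the relevant quantities and in pinning the Schoen--Shkredov estimate on the right energy. Balancing the contributions from the two coordinates $A$ and $B$, so that convexity of each is fully used, is likely where the exponent $31/14$ is forced.
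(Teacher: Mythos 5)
Your reduction to the quadruple count $Q$ is sound as far as it goes (the observation that every line meets the grid $A\times B$ in at most $n^{1/2}$ points, so that $L^{++}$ is vacuous for $k<n^{1/2}$, is correct), and your closing arithmetic is consistent: $|Q|=O(n^{37/14})$ substituted into the bound $|U|=O\bigl(n^2k+n|Q|^{1/2}/k^{1/2}\bigr)$ with $k=n^{3/14}$ would indeed give $O(n^{31/14})$. But the estimate $|Q|=O(n^{37/14})$ \emph{is} the theorem in this formulation, and you have not proved it; you have reverse-engineered the exponent that the final optimization requires and asserted that ``after algebraic manipulation'' the quadruple count becomes an energy-type sum over $A-A$ and $B-B$ to which the Schoen--Shkredov bound (Lemma~\ref{lem:conv}) applies. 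That is the gap. The system \eqref{surfdef} couples the four differences $x-a$, $y-b$, $z-c$, $w-d$ both through a ratio condition and through the inhomogeneous intercept condition (the ``$+2$'' terms), and no change of variables is exhibited that turns the count into a moment of the representation functions $\delta_{A,A}$, $\delta_{B,B}$ of differences along a \emph{fixed} direction --- which is the only quantity Lemma~\ref{lem:conv} controls. Until that reduction is supplied, the claim $|Q|=O(n^{37/14})$ is unsupported, and it is not at all clear that it is attainable by this route.

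For contrast, the paper abandons the $Q$-framework entirely in the convex-grid case. It encodes a unit-area triangle as an incidence between a point of $B^3$ and a plane $h(p)$, $p\in A^3$, in $\R^3$; the key structural fact is that $h(p)\equiv h(p')$ if and only if $p'\in p+(1,1,1)\R$, so the multiplicity of a plane (and, dually, of a point of $B^3$) is exactly a count of collinear grid points in direction $(1,1,1)$, which projects to a statement about $\delta_{A,A}$ (resp.\ $\delta_{B,B}$). \emph{That} is where Lemma~\ref{lem:conv} enters, yielding $O(n^2/k^2)$ $k$-rich points; the rest is a rich/poor case analysis combining this with Szemer\'edi--Trotter applied to planar restrictions and projections of the points and planes, optimized at $k=n^{9/28}$. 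If you wish to salvage your approach, you must actually carry out the reduction of $|Q|$ to such one-dimensional difference statistics; as written, the central step is missing.
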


\noindent{\bf Proof.} 
With each point $p=(a, b,c)\in A^3$ we associate a plane $h(p)$ in $\R^3$, given by
$$
\frac12\left|
\begin{array}{ccc}
a& b& c\\
x& y& z\\
1&1&1
\end{array}\right|=1,$$
or equivalently by
\begin{equation}\label{det}
(c-b)x+(a-c)y+(b-a)z=2.
\end{equation}
We put 
$
H:=\{h(p)\;\mid\;p\in A^3\}.
$

A triangle with vertices $(a_1,x_1),(a_2,x_2),(a_3,x_3)$ has unit area if and only if the left-hand side of (\ref{det}) has absolute value 1, 
so for half of the permutations $(i_1,i_2,i_3)$ of $(1,2,3)$, we have $(x_{i_1},x_{i_2},x_{i_3})\in h(a_{i_1},a_{i_2},a_{i_3})$. 
In other words, the number of unit-area triangles is at most one third of the number of incidences between the points of $B^3$ and the planes of $H$. 
In addition to the usual problematic issue that arise in point-plane incidence problems, 
where many planes can pass through a line that contains many points (see, e.g., \cite{AS05}), 
we need to face here the issue that the planes of $H$ are in general not distinct, and may arise with large multiplicity. Denote by $w(h)$ the \emph{multiplicity} of a plane $h\in H$, that is, $w(h)$ is the number of points $p\in A^3$ for which $h(p)=h$. Observe that, for $p,p'\in A^3$,
\begin{equation}
h(p)\equiv h(p')~\text{ if and only if }~p'\in p+(1,1,1)\R.
\end{equation}

We can transport this notion to points of $A^3$, by defining the \emph{multiplicity} $w(p)$ of a point $p\in A^3$ by
$$
w(p):= \left|(p+(1,1,1)\R)\cap A^3\right|.
$$
Then we clearly have $w(h(p))=w(p)$ for each $p\in A^3$.
Similarly, for $q\in B^3$, we put, by a slight abuse of notation,
$$
w(q):= \left|(q+(1,1,1)\R)\cap B^3\right|,
$$
and refer to it as the \emph{multiplicity} of $q$. (Clearly, the points of $B^3$ are all distinct, 
but the notion of their 
``multiplicity" will become handy in one of the steps of the analysis --- see below.)

Fix a parameter $k\in\N$, whose specific value will be chosen later. We say that $h\in H$ (resp., $p\in A^3$, $q\in B^3$) is $k$-\emph{rich}, if its multiplicity is at least $k$; otherwise we say that it is $k$-\emph{poor}. For a unit-area triangle $T$, with vertices $(a,x), (b,y), (c,z)$, we say that $T$ is \emph{rich-rich} (resp., \emph{rich-poor}, \emph{poor-rich}, \emph{poor-poor}) if $(a,b,c)\in A^3$ is $k$-rich (resp., rich, poor, poor), and $(x,y,z)\in B^3$ is $k$-rich (resp., poor, rich, poor). 
(These notions depend on the parameter $k$, which is fixed throughout this section.)

Next, we show that our assumption that $A$ and $B$ are convex allows us to have some control on the multiplicity of the points and the planes, which we need for the proof. 

For two given subsets $X,Y\subset \R$, and for any $s\in\R$, denote by $\delta_{X,Y}(s)$ 
the number of representations of $s$ in the form $x-y$, with $x\in X$, $y\in Y$. 
The following lemma is taken from Schoen and Shkredov. %Lemma 7, in [SS11]
\begin{lemma}[{\bf Schoen and Shkredov~\cite{SS11}}]\label{lem:conv}
Let $X,Y\subset\R$, with $X$ convex. Then, for any $\tau\ge 1$, we have
$$
\Big|\{s\in X-Y~\mid~\delta_{X,Y}(s)\ge\tau\}\Big|=O\left(\frac{|X||Y|^2}{\tau^3} \right).
$$
\end{lemma}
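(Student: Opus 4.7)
The plan is to reduce to a third-moment estimate via Markov, and then bound it through an incidence count that uses the convexity of $X$. Set $E_3(X,Y):=\sum_s \delta_{X,Y}(s)^3$ and $R_\tau:=\{s\in X-Y:\delta_{X,Y}(s)\ge\tau\}$. Markov's inequality gives $|R_\tau|\,\tau^3\le E_3(X,Y)$, so the lemma follows once we establish $E_3(X,Y)=O(|X|\,|Y|^2)$.

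Unfolding the definition, $E_3(X,Y)$ counts ordered sextuples $(x_1,y_1,x_2,y_2,x_3,y_3)\in X^3\times Y^3$ with $x_1-y_1=x_2-y_2=x_3-y_3$. Grouping by the triple $(y_1,y_2,y_3)\in Y^3$ and observing that the constraint forces $x_i-x_j=y_i-y_j$ for all $i,j$, we rewrite
$$E_3(X,Y)=\sum_{(y_1,y_2,y_3)\in Y^3}\big|\{(x_1,x_2,x_3)\in X^3:\ x_i-x_j=y_i-y_j\ \forall\, i,j\}\big|.$$

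To bound this sum I would exploit convexity as follows. Enumerate $X=\{x_1<x_2<\cdots<x_{|X|}\}$ and set $\phi(i):=x_i$; the convexity hypothesis says $\phi$ is strictly convex on $\{1,\ldots,|X|\}$. For each $y\in Y$ form the shifted planar graph $\Gamma_y:=\{(i,\phi(i)-y):1\le i\le |X|\}$, a strictly convex curve. A sextuple contributing to $E_3(X,Y)$ corresponds, in the $(i,s)$-plane, to a triple of points, one on each of $\Gamma_{y_1},\Gamma_{y_2},\Gamma_{y_3}$, sharing a common horizontal coordinate $s$. Strict convexity of $\phi$ guarantees that every horizontal line meets every $\Gamma_y$ in at most two points, so the configuration has bounded local multiplicity. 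An application of Theorem~\ref{ST}(ii) to this configuration --- with $|X|$ curves, $|Y|^2$ points (indexed by pairs of $y$-values, equivalently by pairs of difference levels), and rich--line counting on horizontal lines --- should yield $E_3(X,Y)=O(|X|\,|Y|^2)$, which, substituted into the Markov step, gives the lemma.

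\noindent\textbf{Main obstacle.} The critical step is deriving the asymmetric bound $O(|X|\,|Y|^2)$: a naive per-triple estimate based only on strict convexity gives only $O(|X|)$ for each $(y_1,y_2,y_3)\in Y^3$, hence the weaker $O(|X|\,|Y|^3)$. Saving the extra factor of $|Y|$ requires amortization across triples, and this is where convexity of $X$ --- in particular that the gaps $x_{i+1}-x_i$ are strictly increasing, so distinct, which in turn forces triples in $X$ with prescribed spacings to be rare --- must be combined with the Szemer\'edi--Trotter bound. I expect pinning down the exact incidence configuration to which Szemer\'edi--Trotter is applied (so as to harvest the asymmetric bound) to be the main technical hurdle of the proof.
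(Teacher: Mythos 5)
First, a remark on the comparison you asked for: the paper offers no proof of this lemma at all --- it is imported verbatim from Schoen and Shkredov~\cite{SS11} and used as a black box --- so there is no in-paper argument to measure your proposal against, and I can only assess it on its own terms.

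On those terms there is a genuine gap, and it sits exactly where you locate it. The Markov reduction $|R_\tau|\,\tau^3\le E_3(X,Y)$ is fine, but everything then hinges on $E_3(X,Y)=O(|X|\,|Y|^2)$, and the incidence configuration you sketch does not deliver it. The curves $\Gamma_y=\{(i,\phi(i)-y)\}$ are indexed by $y\in Y$, so there are $|Y|$ of them, carrying $|X|\,|Y|$ marked points in total; a horizontal line at height $s$ picks up $\delta_{X,Y}(s)$ of these marked points (one per curve, since $\phi$ is strictly increasing --- note this uses only monotonicity of $\phi$, not convexity). Applying Theorem~\ref{ST}(ii) to these $|X|\,|Y|$ points bounds the number of $\tau$-rich heights by $O\bigl((|X|\,|Y|)^2/\tau^3+|X|\,|Y|/\tau\bigr)$, which is a factor $|X|$ worse than needed. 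Your parenthetical ``$|Y|^2$ points indexed by pairs of $y$-values'' gestures at the right fix --- for each rich $s$ one orders the witnesses $x_{i_1}<\cdots<x_{i_\tau}$ with $x_{i_j}-s\in Y$ and passes to the $\tau-1$ consecutive pairs $(x_{i_j}-s,\,x_{i_{j+1}}-s)\in Y\times Y$ --- but you never specify the curve family through $Y\times Y$ to which Szemer\'edi--Trotter is to be applied, nor verify the two properties that make such an application legitimate (any two curves of the family meet in $O(1)$ points, and any two points lie on $O(1)$ common curves). Those verifications are precisely where the strict convexity of $X$ enters (translates of a strictly convex curve cross at most once), and they constitute essentially the entire content of the lemma; without them the proposal establishes only the trivial bound $O(|X|\,|Y|^3)$ that you yourself note. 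A secondary caution: the intermediate target $E_3(X,Y)=O(|X|\,|Y|^2)$ is strictly stronger than the stated lemma, which by dyadic summation yields only $E_3(X,Y)=O(|X|\,|Y|^2\log |X|)$; so even once the correct configuration is found, it should be aimed at the level-set bound directly rather than at the third moment, or you risk chasing a statement that may fail by a logarithmic factor.
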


Lemma \ref{lem:conv} implies that the number of points $(a,b)\in A^2$, for which the line $(a,b)+(1,1)\R$ contains at least $k$ points of $A^2$, is $O({n^{3/2}}/{k^2})$. Indeed, the number of differences $s\in A-A$ with $\delta_{A,A}(s)\ge\tau$ is $O(n^{3/2}/\tau^3)$. 
Each difference $s$ determines, in a 1-1 manner,  a line in $\R^2$ with orientation $(1,1)$ that contains the $\delta_{A,A}(s)$ pairs $(a,b)\in A^2$ with $b-a=s$. 
Let $M_\tau$ (resp., $M_{\ge \tau}$) denote the number of differences $s\in A-A$
with $\delta_{A,A}(s)=\tau$ (resp., $\delta_{A,A}(s)\ge \tau$). Then the desired number of 
points is
$$
\sum_{\tau\ge k}\tau M_\tau
=kM_{\ge k}+\sum_{\tau>k}M_{\ge \tau}
=O(n^{3/2}/k^2)+\sum_{\tau>k}O(n^{3/2}/\tau^3)
=O(n^{3/2}/k^2).
$$

We next establish the following simple claim.
\begin{lemma}\label{clm}
The number of $k$-rich points in $A^3$ and in $B^3$ is $O({n^2}/{k^2})$.
\end{lemma}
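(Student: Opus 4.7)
The plan is to reduce the three-dimensional statement to the two-dimensional count that has already been derived just before the lemma. First, I would show that any $k$-rich triple in $A^3$ forces its projection onto any two coordinates to be $k$-rich in the corresponding two-dimensional sense. Concretely, if $p=(a,b,c)\in A^3$ is $k$-rich, then the line $p+(1,1,1)\R$ meets $A^3$ in $k$ distinct triples $(a_i,b_i,c_i)$. Projecting onto the first two coordinates gives $k$ pairs $(a_i,b_i)\in A^2$ that are pairwise distinct (the $a_i$ are distinct) and all satisfy $b_i-a_i=b-a$, so they all lie on the line $(a,b)+(1,1)\R$. Hence $(a,b)$ is a $k$-rich pair in the 2-dimensional sense analyzed in the paragraph preceding the lemma.

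Next, I would invoke the bound derived there: an application of Lemma~\ref{lem:conv} shows that the number of such 2-dimensional $k$-rich pairs in $A^2$ is $O(n^{3/2}/k^2)$. Each such pair $(a,b)$ can be extended to a triple $(a,b,c)\in A^3$ in at most $|A|=n^{1/2}$ ways (one for each choice of $c\in A$). Multiplying yields that the number of $k$-rich triples in $A^3$ is at most $O(n^{3/2}/k^2)\cdot n^{1/2}=O(n^2/k^2)$, as claimed. The identical argument, with $A$ replaced by $B$, handles $B^3$, since $B$ is also convex and so Lemma~\ref{lem:conv} applies to it verbatim.

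There is no real obstacle to this step: the substantive content lies in the Schoen--Shkredov estimate and the 2-dimensional calculation already carried out; the present lemma is essentially a one-line projection plus a trivial multiplication by $|A|$. The only thing to verify carefully is the ``only-if'' implication in the projection step, namely that the $k$ witnesses on the $(1,1,1)$-line through $p$ really do produce $k$ \emph{distinct} pairs on the $(1,1)$-line through $(a,b)$, which is immediate since the $a_i$-coordinates along a line in direction $(1,1,1)$ are pairwise distinct.
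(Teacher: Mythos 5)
Your proposal is correct and follows essentially the same route as the paper: project the $(1,1,1)$-line through a rich triple onto the first two coordinates, note the projection is injective so the image pair is $k$-rich in $A^2$, apply the $O(n^{3/2}/k^2)$ count derived from Lemma~\ref{lem:conv}, and multiply by the at most $n^{1/2}$ triples lying over each pair. No gaps; the symmetric argument for $B^3$ is also as in the paper.
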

\begin{proof}
Let $(a,b,c)\in A^3$ be $k$-rich. Then, by definition, the line $l:=(a,b,c)+(1,1,1)\R$ contains at least $k$ points of $A^3$. 
We consider the line $l':=(a,b)+(1,1)\R$, which is the (orthogonal) projection of $l$ onto the $xy$-plane, which we identify with $\R^2$. 
Note that the projection of the points of $l\cap A^3$ onto $\R^2$ is injective and its image is equal to $l'\cap A^2$.  In particular, $l'$ contains at least $k$ points of $A^2$. As just argued, the total number of such points in $A^2$ (lying on some line of the form $l'$, that contains at least $k$ points of $A^2$) is $O({n^{3/2}}/{k^2})$. Each such point is the projection of at most $n^{1/2}$ $k$-rich points of $A^3$ (this is the maximum number of lines of the form $(a,b,c)+(1,1,1)\R$ that project onto the same line $l'$). Thus, the number of $k$-rich points in $A^3$ is $O(\frac{n^{3/2}}{k^2}\cdot n^{1/2})=O({n^2}/{k^2})$. The same bound applies to the number of $k$-rich points in $B^3$, by a symmetric argument. 
\end{proof}
%Remark:
\noindent\emph{Remark.} The proof of Lemma~\ref{clm} shows, in particular, that the images of the sets of $k$-rich points of $A^3$ and of $B^3$, under the projection map onto the $xy$-plane, are of cardinality $O({n^{3/2}}/{k^2})$.

In what follows, we bound separately the number of unit-area  triangles that are rich-rich, poor-rich (and, symmetrically, rich-poor), and poor-poor.

%RICH-RICH
\paragraph{Rich-rich triangles.}

Note that for $((a,b,c),(\xi,\eta))\in A^3\times B^2$, with $a\neq b$, there exists at most one point $\zeta\in B$ such that $T((a,\xi),(b,\eta),(c,\zeta))$ has unit area. Indeed, the point $(c,\zeta)$ must lie on a certain line $l((a,\xi),(b,\eta))$ parallel to $(a,\xi)-(b,\eta)$. This line intersects $x=c$ in exactly one point (because $a\neq b$), which determines the potential value of $\zeta$. Thus, since we are now concerned with the number of rich-rich triangles (and focusing at the moment on the case where $a\neq b$), it suffices to bound the number of such pairs $((a,b,c),(\xi,\eta))$, with $(a,b,c)\in A^3$ being rich, and $(\xi,\eta)\in B^2$ being the projection of a rich point of $B^3$, which is $O((n^2/k^2)\cdot (n^{3/2}/k^2))=O(n^{7/2}/k^4)$, using Lemma~\ref{clm} and the Remark following it.

It is easy to check that the number of unit-area triangles $T(p,q,r)$, where $p,q,r\in P$ and $p,q$ share the same abscissa (i.e., $A$-component), is $O(n^2)$. Indeed, there are $\Theta(n^{3/2})$ such pairs $(p,q)$, and for each of them there exist at most $n^{1/2}$ points $r\in P$, such that $T(p,q,r)$ has unit area (because the third vertex $r$ must lie on a certain line $l(p,q)$, which passes through at most this number of points of $P$); here we do not use the fact that we are interested only in rich-rich triangles.
We thus obtain the following lemma.
\begin{lemma}\label{rr}
The number of rich-rich triangles spanned by $P$ is $ O\left(\frac{n^{7/2}}{k^4}+n^2\right)$.
\end{lemma}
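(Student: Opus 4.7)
\textbf{Proof plan for Lemma~\ref{rr}.} The plan is to reduce the count to a product of the rich-point bound of Lemma~\ref{clm} and the projection bound of the Remark following it, by showing that in the non-degenerate case most of the data of a rich-rich triangle pins down its remaining vertex uniquely. I would split into two cases according to whether two vertices of the triangle share an $A$-coordinate.

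In the non-degenerate case, consider a rich-rich triangle with ordered vertices $(a,\xi),(b,\eta),(c,\zeta)$, where $(a,b,c)\in A^3$ and $(\xi,\eta,\zeta)\in B^3$ are both $k$-rich, and assume $a\neq b$. The key elementary observation is that, once $(a,b,c)$ and $(\xi,\eta)$ are fixed, the value of $\zeta$ is determined: the third vertex of a unit-area triangle on $(a,\xi)$ and $(b,\eta)$ must lie on one of two lines parallel to the segment joining these points, and each such line meets the vertical line $x=c$ in exactly one point (using $a\neq b$). It therefore suffices to count the possible pairs $\bigl((a,b,c),(\xi,\eta)\bigr)$, where $(a,b,c)$ is a $k$-rich point of $A^3$ and $(\xi,\eta)$ is the projection onto its first two coordinates of some $k$-rich point of $B^3$. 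By Lemma~\ref{clm} the first factor has size $O(n^2/k^2)$, and by the Remark the second factor has size $O(n^{3/2}/k^2)$. Their product, times a constant accounting for the choice of which vertex to label last and for the (at most two) parallel lines, is $O(n^{7/2}/k^4)$.

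For the degenerate case $a=b$, I would use a completely general count that does not exploit richness at all. There are $\Theta(n^{3/2})$ pairs $p,q\in P$ sharing the same $A$-coordinate, and for each such pair the third vertex of any unit-area triangle $T(p,q,r)$ must lie on one of two vertical lines, each containing at most $|B|=n^{1/2}$ points of $P$. Summing gives $O(n^2)$ such triangles. Combining the two cases yields the stated bound $O(n^{7/2}/k^4+n^2)$.

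I do not anticipate a serious obstacle: the geometric fact that $\zeta$ is pinned down is immediate, and the two richness estimates are already in hand. The only mild subtlety worth flagging is that the relevant $B$-side quantity is the number of \emph{projections} into $B^2$ of $k$-rich points of $B^3$, rather than the number of $k$-rich points of $B^3$ themselves---this is precisely what the Remark after Lemma~\ref{clm} provides, and using the weaker $O(n^2/k^2)$ bound here would yield only $O(n^{4}/k^4)$, which is insufficient.
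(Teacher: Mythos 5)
Your proposal is correct and follows essentially the same route as the paper: the non-degenerate case is reduced to counting pairs of a $k$-rich point of $A^3$ and a projected $k$-rich point of $B^2$ via the observation that $\zeta$ is then determined, and the degenerate equal-abscissa case is handled by the same trivial $\Theta(n^{3/2})\cdot n^{1/2}$ count. The subtlety you flag---that one must use the $O(n^{3/2}/k^2)$ bound on projections from the Remark rather than the $O(n^2/k^2)$ bound on rich points of $B^3$---is exactly the point the paper's argument relies on.
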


%POOR-RICH RICH-POOR
\paragraph{Poor-rich and rich-poor triangles.}
Without loss of generality, it suffices to consider only poor-rich triangles. 
Put 
$$
H_i:=\{h\in H\;\mid\;2^{i-1}\le w(h)< 2^i\},
$$
for $i=1,\ldots,\log k$, and
$$
S_{\ge k}:=\{q\in B^3\;\mid\;w(q)\ge k\}.
$$
That is, by definition, $\bigcup_iH_i$ is the collection of $k$-poor planes of $H$, and $S_{\ge k}$ is the set of $k$-rich points of $B^3$. 
Since each element of $H_i$ has multiplicity at least $2^{i-1}$, we have the trivial bound $|H_i|\le n^{3/2}/2^{i-1}$.

Consider the family of horizontal planes ${\cal F}:=\{\xi_z\}_{z\in B}$, where $\xi_{z_0}:=\{z=z_0\}$. Our strategy is to restrict $S_{\ge k}$ and $H_i$, for $i=1,\ldots,\log k$, to the planes $\xi\in{\cal F}$, and apply 
the Szemer\'edi--Trotter incidence bound (see Theorem~\ref{ST}) to the resulting collections of points and intersection lines, on each such $\xi$. 
Note that two distinct planes $h_1,h_2\in H$ restricted to $\xi$, become two distinct lines in $\xi$. Indeed, each plane of $H$ contains a line 
parallel to $(1,1,1)$, and two such planes, that additionally share a horizontal line within $\xi$, must be identical. Using the Remark 
following Lemma~\ref{clm}, we have that the number of rich points $(x,y,z_0)\in S_{\ge k}$, with $z_0$ fixed, is $O\left({n^{3/2}}/{k^2}\right)$; that is, $|S_{\ge k}\cap \xi_{z_0}|=O\left({n^{3/2}}/{k^2}\right)$ for every fixed $z_0$.

The number of incidences between the points of $S_{\ge k}$ and the poor planes of $H$, counted with multiplicity (of the planes) is at most
$$
\sum_{z\in B}\sum_{i=1}^{\log k}2^i\cdot {\cal{I}}(S_{\ge k}\cap \xi_z,H_{iz}),
$$
where $H_{iz}:=\{h\cap \xi_z\mid h\in H_i\}$. By Theorem~\ref{ST}, this is at most
\begin{align*}
\sum_{z\in B}\sum_{i=1}^{\log k}&
2^i\cdot O\left( \left(\frac{n^{3/2}}{k^2}\right)^{2/3}\left(\frac{n^{3/2}}{2^{i-1}}\right)^{2/3}+ \frac{n^{3/2}}{k^2}+\frac{n^{3/2}}{2^{i-1}}\right)\\
=& \sum_{z\in B}
O\left(
\frac{n^2}{k^{4/3}}\sum_{i=1}^{\log k}2^{i/3}+\frac{n^{3/2}}{k^2}\sum_{i=1}^{\log k}2^i+n^{3/2}\log k\right)\\
=&\sum_{z\in B}
O\left(\frac{n^2}{k}+\frac{n^{3/2}}{k}+n^{3/2}\log k\right) \\
=&
O\left( \frac{n^{5/2}}{k}+n^2\log k \right).
\end{align*}

This bounds the number of poor-rich triangles spanned by $P$. Clearly, using a symmetric argument, this bound also applies to the number of rich-poor triangles spanned by $P$. We thus obtain the following lemma.
\begin{lemma}\label{pr}
The number of poor-rich triangles and of rich-poor triangles spanned by $P$ is $\displaystyle O\left( \frac{n^{5/2}}{k}+n^2\log k \right)$.
\end{lemma}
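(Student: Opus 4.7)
The plan is to bound the poor-rich count; the rich-poor bound then follows by a symmetric argument swapping the roles of $A$ and $B$. A poor-rich unit-area triangle corresponds to an incidence between a $k$-rich point of $B^3$ (an element of $S_{\ge k}$) and a $k$-poor plane of $H$, counted with plane-multiplicity, since many tuples $(a,b,c)\in A^3$ may yield the same plane $h((a,b,c))$. So the task reduces to bounding the weighted incidence count between $S_{\ge k}$ and the $k$-poor sub-family of $H$.

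My approach is to slice $\R^3$ by horizontal planes $\xi_{z_0}:=\{z=z_0\}$, one for each $z_0\in B$, and apply the Szemer\'edi--Trotter theorem (Theorem~\ref{ST}) within each slice. Two observations make this work. First, by the remark following Lemma~\ref{clm}, $|S_{\ge k}\cap\xi_{z_0}|=O(n^{3/2}/k^2)$ for every $z_0\in B$. Second, within a single horizontal slice, distinct planes of $H$ cut out distinct lines, because each plane of $H$ contains a line parallel to $(1,1,1)$, so two planes of $H$ sharing a horizontal line must agree in two transverse directions and hence coincide. This ensures that the planar incidence bound applies honestly to the restricted configuration. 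To handle multiplicities cleanly, I would dyadically decompose the poor planes by setting $H_i:=\{h\in H\mid 2^{i-1}\le w(h)<2^i\}$ for $i=1,\ldots,\log k$, with the trivial cardinality bound $|H_i|\le n^{3/2}/2^{i-1}$ arising from $\sum_{h\in H_i}w(h)\le|A^3|=n^{3/2}$.

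For each slice $\xi_{z_0}$ and each dyadic class $H_i$, I apply Szemer\'edi--Trotter to the $O(n^{3/2}/k^2)$ rich points and the at most $n^{3/2}/2^{i-1}$ lines, multiply by the plane weight $2^i$, sum over $i$ from $1$ to $\log k$, and then sum over the $|B|=n^{1/2}$ choices of $z_0$. The $M^{2/3}N^{2/3}$ term produces a geometric series in $2^{i/3}$ whose dominant contribution (after the slice-sum) is on the order of $n^{5/2}/k$; the $M$ term contributes similarly after summing $2^i$ against the $1/k^2$ factor; and the $N$ term, being independent of $i$, accumulates an extra $\log k$ across the dyadic scales and yields the $n^2\log k$ contribution.

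The step I expect to require the most care is the bookkeeping: one must simultaneously track the dyadic multiplicity class, the horizontal-slice incidence bound, and the weighting of each plane by $w(h)$, and check that the resulting geometric sums collapse without losing more than a single $\log k$ factor. The $(1,1,1)$-directional structure of $H$, which both forces line-distinctness within each slice and underlies the bound on $|S_{\ge k}\cap\xi_{z_0}|$ via the convexity-based Lemma~\ref{lem:conv}, is the essential structural feature that makes these weighted estimates compose correctly.
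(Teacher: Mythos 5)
Your proposal is correct and follows essentially the same route as the paper: the same dyadic decomposition of the poor planes into classes $H_i$ with $|H_i|\le n^{3/2}/2^{i-1}$, the same slicing by horizontal planes $\xi_{z_0}$ for $z_0\in B$ with the bound $|S_{\ge k}\cap\xi_{z_0}|=O(n^{3/2}/k^2)$ from the remark after Lemma~\ref{clm}, the same use of the $(1,1,1)$-direction to guarantee distinct planes restrict to distinct lines in each slice, and the same weighted Szemer\'edi--Trotter summation. Your accounting of which term of the incidence bound produces $n^{5/2}/k$ and which produces $n^2\log k$ also matches the paper's computation.
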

%%%%%%%%% POOR-POOR
\paragraph{Poor-poor triangles.}
Again we are going to use Theorem \ref{ST}.  For $i=1,\ldots,\log k$, put
$$
S_i:=\{q\in B^3\;\mid\;2^{i-1}\le w(q)< 2^i\},
$$
and let $S_i'$, $H_i'$ be the respective (orthogonal) projections of $S_i$, $H_i$ to the plane $\eta:=\{x+y+z=1\}$. 
Note that $H_i'$ is a collection of lines in $\eta$. 
Moreover, arguing as above, two distinct planes of $H_i$ project to two distinct lines of $H_i'$, and thus the multiplicity 
of the lines is the same as the multiplicity of the original planes of $H_i$. Similarly, a point $q\in S_i$ with multiplicity 
$t$ projects to a point $q'\in S_i'$ with multiplicity $t$ (by construction, there are exactly $t$ points of $S_i$ that project 
to $q'$). 
These observations allow us to use here too the trivial bounds $|S_i'|\le n^{3/2}/2^{i-1}$, $|H_i'|\le n^{3/2}/2^{i-1}$, 
for $i=1,\ldots,\log k$.

Applying Theorem~\ref{ST} to the collections $S_i',H_j'$ in $\eta$, for $i,j=1,\ldots,\log k$, taking under account the 
multiplicity of the points and of the lines in these collections, we obtain that the number of incidences between the poor 
points and the poor planes, counted with the appropriate multiplicity, is at most
\begin{align*}
\sum_{i,j=1}^{\log k}2^{i+j}\cdot  I(S_i',H_j')
&= \sum_{i,j=1}^{\log k}2^{i+j}\cdot
O\left(
\left(\frac{n^{3/2}}{2^{i-1}}\right)^{2/3}\left(\frac{n^{3/2}}{2^{j-1}}\right)^{2/3}+\frac{n^{3/2}}{2^{i-1}}+\frac{n^{3/2}}{2^{j-1}}
\right)\\
&=O\left(
n^2\sum_{i,j=1}^{\log k} 2^{(i+j)/3}+n^{3/2}\sum_{i,j=1}^k\left(2^i+2^j\right)
\right)\\
&=
O\left(
n^2k^{2/3}+n^{3/2}k\log k
\right).
\end{align*}
Thus, we obtain the following lemma.
\begin{lemma}\label{pp}
The number of poor-poor triangles spanned by $P$ is $O\left(n^2k^{2/3}+n^{3/2}k\log k \right)$.
\end{lemma}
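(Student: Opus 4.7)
The plan is to follow the same general strategy as in the preceding poor-rich case (Lemma~\ref{pr}), but now perform a dyadic decomposition on \emph{both} sides simultaneously, since both the point multiplicities in $B^3$ and the plane multiplicities in $H$ are bounded by $k$ in the poor-poor regime. Concretely, I would partition the $k$-poor points into buckets $S_i := \{q \in B^3 \mid 2^{i-1} \le w(q) < 2^i\}$ and the $k$-poor planes into buckets $H_j := \{h \in H \mid 2^{j-1} \le w(h) < 2^j\}$, for $i, j = 1, \ldots, \log k$. Since the multiplicities within each bucket sum to at most the ambient total, I get the trivial bounds $|S_i| \le n^{3/2}/2^{i-1}$ and $|H_j| \le n^{3/2}/2^{j-1}$.

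Next, instead of slicing by horizontal planes as in Lemma~\ref{pr}, I would project everything orthogonally onto a plane $\eta$ transverse to the direction $(1,1,1)$ (for concreteness, $\eta := \{x+y+z=1\}$). The key observation is that every plane $h \in H$ contains a line parallel to $(1,1,1)$, so $h$ projects to a single line in $\eta$; moreover two distinct planes of $H$ must yield distinct lines in $\eta$, since otherwise they would share both a direction $(1,1,1)$ and a horizontal line in $\eta$, forcing them to coincide. Likewise a point $q \in S_i$ projects to a point $q' \in \eta$, and the fiber of the projection over $q'$ within $B^3$ has size exactly $w(q)$, so multiplicities are preserved on both sides. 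Call the resulting projected collections $S_i'$ and $H_j'$.

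Now I would invoke the Szemerédi--Trotter bound (Theorem~\ref{ST}(i)) on each pair $(S_i', H_j')$ in $\eta$, and weight each incidence by $2^{i+j}$, the product of the multiplicity-upper-bounds on the two sides, to recover the correct count of poor-poor triangles. This yields a total of
\[
\sum_{i,j=1}^{\log k} 2^{i+j}\cdot O\!\left(\left(\tfrac{n^{3/2}}{2^{i-1}}\right)^{2/3}\!\!\left(\tfrac{n^{3/2}}{2^{j-1}}\right)^{2/3}+\tfrac{n^{3/2}}{2^{i-1}}+\tfrac{n^{3/2}}{2^{j-1}}\right).
\]
Carrying out the geometric sums (the leading term in each factor is $2^{(i+j)/3}$, whose sum over $i,j \le \log k$ is $O(k^{2/3})$; the linear terms contribute $O(n^{3/2} k \log k)$) gives the claimed bound $O(n^2 k^{2/3} + n^{3/2} k \log k)$.

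The main point to verify carefully is the projection step: both the fact that two distinct planes in $H$ yield distinct lines in $\eta$ (so that Szemerédi--Trotter applies to genuine point-line incidences rather than to multi-sets), and the fact that multiplicity is preserved exactly under the projection, so that the weights $2^{i+j}$ correctly bound the number of unit-area triangles contributed by each incidence. Once these bookkeeping points are pinned down, the estimate is a routine dyadic summation.
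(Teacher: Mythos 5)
Your proposal is correct and follows essentially the same route as the paper: the same dyadic bucketing $S_i$, $H_j$ by multiplicity, the same projection onto $\eta=\{x+y+z=1\}$ with the same justification that distinct planes of $H$ (each containing a line parallel to $(1,1,1)$) project to distinct lines and that multiplicities are preserved, and the same weighted Szemer\'edi--Trotter summation yielding $O(n^2k^{2/3}+n^{3/2}k\log k)$. The bookkeeping points you flag for verification are exactly the ones the paper addresses, and your dyadic sums are carried out correctly.
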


In summary, the number of unit-area triangles spanned by $P$ is
\begin{equation}
O\left(\frac{n^{7/2}}{k^4}+\frac{n^{5/2}}{k}+n^2k^{2/3}+n^{3/2}k\log k\right).
\end{equation}
Setting $k=n^{9/28}$ makes this bound $O(n^{31/14})$, and Theorem \ref{main3} follows. $\hfill\qed$

\vspace{1cm}
\noindent{\bf Acknowledgment.} We are grateful to Frank de Zeeuw for several very helpful comments that simplified some parts of the analysis.

\end{document}